\documentclass[10pt]{article}
\usepackage{color}
\usepackage{amssymb}
\usepackage{amsthm,array,amsfonts}
\usepackage{amsmath}
\usepackage{verbatim}
\usepackage{tikz-cd}
\usepackage{enumitem}
\setitemize{itemsep=0pt}
\setenumerate{itemsep=0pt}

\usepackage[justification=centering]{caption}
\usepackage{hyperref}

\usepackage{stmaryrd}
\hypersetup{
    colorlinks,
    citecolor=blue,
    filecolor=black,
    linkcolor=black,
    %urlcolor=black
}

\usepackage{accents}

%latexsym
%\usepackage[all]{xy}
\newtheorem{theo}{Theorem}[section]
\newtheorem{thm}{Theorem}[section]
\newtheorem{prop}[theo]{Proposition}

\newtheorem{lemma}[theo]{Lemma}

\newtheorem{cor}[theo]{Corollary}
\newtheorem{question}[theo]{Question}

\newtheorem{conjecture}{Conjecture}  % Conjecture labeled by letters
\newtheorem{conj}[conjecture]{Conjecture}

\theoremstyle{remark}

\newtheorem{rmk}[theo]{Remark}
\newtheorem{example}[theo]{Example}

\voffset=-1in
\setlength{\hoffset}{1,5cm}
\setlength{\oddsidemargin}{0cm}
\setlength{\textheight}{23cm}
\setlength{\textwidth}{14cm}

\newcommand{\BC}{{\mathbb{C}}}

\newcommand{\BQ}{{\mathbb{Q}}}
\newcommand{\BR}{{\mathbb{R}}}

\newcommand{\BZ}{{\mathbb{Z}}}

\newcommand{\CE}{{\mathcal E}}
\newcommand{\CF}{{\mathcal F}}

\newcommand{\CL}{{\mathcal L}}

\newcommand{\CO}{{\mathcal O}}

\newcommand{\Fg}{{\mathfrak{g}}}

\newcommand{\Fq}{{\mathfrak{q}}}

\newcommand{\ch}{\mathsf{ch}}

\newcommand{\id}{\mathrm{id}}

\newcommand{\pt}{{\mathsf{p}}}
\newcommand{\td}{{\mathrm{td}}}
%\footheight 35cm

\newcommand{\Mbar}{{\overline M}}
\newcommand\ev{\operatorname{ev}}
\newcommand{\Pic}{\mathop{\rm Pic}\nolimits}

\DeclareFontFamily{OT1}{rsfs}{}
\DeclareFontShape{OT1}{rsfs}{n}{it}{<-> rsfs10}{}
\DeclareMathAlphabet{\curly}{OT1}{rsfs}{n}{it}

\newcommand\Hom{\operatorname{Hom}}
\newcommand{\p}{\mathbb{P}}

\newcommand\Hilb{\operatorname{Hilb}}

\newcommand\Coh{\operatorname{Coh}}

\newcommand{\vir}{\mathsf{vir}}

\DeclareFontFamily{OT1}{rsfs}{}
\DeclareFontShape{OT1}{rsfs}{n}{it}{<-> rsfs10}{}
\DeclareMathAlphabet{\curly}{OT1}{rsfs}{n}{it}

\newcommand\End{\operatorname{End}}

\newcommand{\1}{v_{\varnothing}}

\newcommand{\Aut}{\mathrm{Aut}}

\newcommand{\Mon}{\mathsf{Mon}}
\newcommand{\DMon}{\mathsf{DMon}}

\newcommand{\SO}{\mathrm{SO}}

\newcommand{\ST}{\mathsf{ST}}

\newcommand{\vacuum}{\1}

\newcommand{\tr}{\operatorname{tr}}
\newcommand{\Tr}{\operatorname{Tr}}

\newcommand{\pr}{\operatorname{pr}}

\newcommand{\FM}{\operatorname{FM}}

\title{Lagrangian planes in hyperk\"ahler varieties of $K3^{[n]}$-type}
\author{Georg Oberdieck}
\date{\today}

%\newfont{\gothic}{eufb10}
\begin{document}
\maketitle
\setcounter{section}{0}

\begin{abstract}
Jieao Song recently conjectured a formula for the class of a Lagrangian plane on a hyperk\"ahler variety of $K3^{[n]}$-type
in terms of the class of a line on it.
We give a proof of this conjecture if the line class is primitive.
\end{abstract}
%\vspace{20pt}
%\setcounter{tocdepth}{1}
%\tableofcontents
%\date{\today}

\section{Introduction}
An (irreducible) hyperk\"ahler variety $X$ is a simply-connected smooth projective variety such that $H^0(X,\Omega_X^2)$ is generated by a holomorphic-symplectic form.
A Lagrangian plane $P \subset X$ is a Lagrangian submanifold with $P \cong \p^n$, where $\dim(X)=2n$.
Let $\ell \in H_2(X,\BZ)$ be the class of a line in $P$.
In the context of characterizing ample classes in terms of the intersection properties of the Beauville-Bogomolov-Fujiki (BBF)
pairing on $H^2(X,\BZ)$ and its $\BQ$-linear extension to $H_2(X,\BZ) \cong H^2(X,\BZ)^{\ast}$,
Hassett and Tschinkel proposed that the norm $(\ell, \ell)$ should be a universal constant
depending only on the deformation type of $X$ \cite{HT2}.
For hyperk\"ahler varieties deformation equivalent to the Hilbert scheme of $n$ points on a K3 surface (we say they are of $K3^{[n]}$-type)
the constant was shown to be $(\ell, \ell) = -(n+3)/2$ by \cite{HT, HHT, BJ} for $n=2,3,4$ and eventually for all $n$ by Bakker \cite{Bakker}.
For generalized Kummer fourfolds we have $(\ell, \ell) = -3/2$ by \cite{HT3}.
For arbitrary hyperk\"ahler varieties, Song \cite[Conj. 2.2.12]{SongPhD} conjectured that 
\[ (\ell, \ell) = -2 r_X, \quad r_X = \frac{(2n-1) C(c_2(T_X))}{24 C(1)} \]
where $C(\alpha)$ is the Fujiki constant of a monodromy invariant class $\alpha$.

In this paper we study what can be said about the class $[P] \in H^{2n}(X,\BZ)$.
Let
\[ L = ( \ell, - ) \in H_2(X,\BQ)^{\vee} \cong H^2(X,\BQ) \]
be the class dual to $\ell$ with respect to the BBF form.
The class $L \in H^2(X,\BQ)$ is characterized by $(L, x) = \int \ell \cup x$ for all $x \in H^2(X,\BZ)$.
For $K3^{[n]}$-type an explicit formula for $[P]$ was found in terms of powers of $L$ and absolute Hodge classes
in case $n=2,3,4$ by Hassett-Tschinkel \cite{HT}, Harvey-Hassett-Tschinkel \cite{HHT}, Bakker-Jorza \cite{BJ}.
Recently, Song conjectured an extension of their formulas to any $n$ in the following simple and beautiful way.
Let $[ \gamma ]_k \in H^{2k}(X)$ denote the complex degree $k$ (cohomological degree $2k$) component of a class $\gamma \in H^{\ast}(X)$.

\begin{conj}[{Song, \cite[Conj. 2.2.10]{SongPhD}}] \label{conj:Song} Let $X$ be a hyperk\"ahler variety of $K3^{[n]}$-type and let $P \subset X$ be a Lagrangian plane. Then
\begin{equation} [ P ] = \left[ \exp(L) \sqrt{\td_X} \right]_{n}, \label{90sifsdf} \end{equation}
where $L \in H^2(X,\BQ)$ is the class dual to the class $\ell$ of a line on $P$.
\end{conj}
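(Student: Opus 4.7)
The plan is to combine a deformation and monodromy reduction with an explicit verification on the Hilbert scheme of a K3 surface.

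As a first step, both sides of \eqref{90sifsdf} can be shown to depend on $(X, \ell)$ alone and to be monodromy-equivariant. A Lagrangian $\p^n$ is infinitesimally rigid and unobstructed in its ambient hyperk\"ahler variety, so $P$ deforms uniquely together with $(X,\ell)$; meanwhile the right-hand side is built from $L$ (determined by $\ell$ via the BBF form) and from $\td_X$ (a universal polynomial in the Chern classes of $X$, hence monodromy invariant). Markman's description of the monodromy group for $K3^{[n]}$-type varieties then partitions the primitive classes $\ell$ with $(\ell,\ell) = -(n+3)/2$ into finitely many orbits, each of which can be represented by a Lagrangian plane inside a Hilbert scheme of points on a suitable projective K3. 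It therefore suffices to verify \eqref{90sifsdf} in each such model.

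The base model is $X = S^{[n]}$ for a projective K3 surface $S$ containing a smooth $(-2)$-curve $C \cong \p^1$, with $P = C^{[n]} \cong \p^n$ and $\ell$ the class of a line in $P$. In Nakajima's Fock-space description of $\bigoplus_n H^*(S^{[n]},\BQ)$ one has $[P] = \frac{1}{n!}\, q_1([C])^n \vacuum$, while $L$ is the tautological divisor class attached to $\CO_S(C)$. Lehn's theorem expresses the Chern classes of $T_{S^{[n]}}$, and hence $\td_{S^{[n]}}$, as universal Heisenberg polynomials in the $q_k([pt])$, $q_k(1)$ and their descendants. Song's formula then translates into a concrete operator identity in the Fock space, which I would attack by pairing both sides against monomials in the Nakajima operators and reducing to known universal relations encoded by the Fujiki constants of $X$.

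The main obstacle I expect is the factor $\sqrt{\td_X}$ on the right: although $\td_X$ itself admits a clean operator presentation, taking square roots does not respect the normal ordering in the Heisenberg algebra, so the resulting coefficients cannot be read off from Lehn's formula directly. The appearance of the Fujiki constant $C(c_2(T_X))$ in Song's numerical prediction for $r_X$ is however a strong hint that the sought identity is governed by a single universal Fujiki-style relation; once established in the model, the monodromy reduction of the first step promotes it to arbitrary primitive pairs $(X, P, \ell)$.
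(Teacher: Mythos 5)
Your first step (deform/transport to a single model) is essentially the paper's reduction, but note that what makes \emph{one} model suffice is Bakker's theorem \cite{Bakker} that there is a \emph{unique} monodromy orbit of Lagrangian planes with primitive line class; your weaker claim that the primitive classes with $(\ell,\ell)=-(n+3)/2$ fall into finitely many orbits, each represented by a plane in some $S^{[n]}$, is itself unproven and would in any case leave you with several models to check. The model $X=S^{[n]}$, $P=C^{[n]}$ for a $(-2)$-curve $C\subset S$ is the right one.

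The genuine gap is in the second step: the verification on the model is never carried out, and the strategy you sketch cannot close it. You correctly identify that $\sqrt{\td_{S^{[n]}}}$ has no usable Heisenberg expansion, but the proposed remedy --- ``reducing to universal Fujiki-style relations'' --- only controls intersection numbers against powers of divisor classes, i.e.\ the projection of $[P]$ to the Verbitsky component (the subring generated by $H^2$). In $H^{2n}(S^{[n]})$ that projection is far from determining $[P]$; this is precisely why the earlier results of Hassett--Tschinkel, Harvey--Hassett--Tschinkel, Bakker--Jorza and Beckmann stop at the Verbitsky component or at small $n$. Your Fock-space input is also incorrect: $[C^{[n]}]\neq \frac{1}{n!}\Fq_1([C])^n\vacuum$; by Grojnowski--Nakajima one has $\sum_n [C^{[n]}]=\exp\bigl(\sum_{m\geq 1}\tfrac{(-1)^{m-1}}{m}\Fq_m([C])\bigr)\vacuum$, so all $\Fq_m([C])$ contribute. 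The paper avoids the direct computation entirely: via the Bridgeland--King--Reid equivalence and Krug's description of $\wedge^k\CL^{[n]}$ one shows $\ST_{\CO_S}^{[n]}(\CL_n\otimes\CO(-\delta))=\iota_{\ast}\omega_{\p^n}$, and then Markman's theorem identifying the cohomological action of such induced autoequivalences with the (conjugated) LLV representation --- made explicit in Nakajima operators --- shows this action is degree-reversing and fixes $[C^{[n]}]$, whence taking Mukai vectors and degree-$n$ parts yields \eqref{90sifsdf}. Some categorical or representation-theoretic input of this kind is needed; a purely numerical Fock-space attack as proposed does not reach the full class $[P]$.
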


The main result of this paper is a proof of his conjecture in the primitive case.

\begin{thm} \label{thm:main} Conjecture~\ref{conj:Song} holds if $\ell$ is primitive (i.e. indivisible in $H_2(X,\BZ)$).
\end{thm}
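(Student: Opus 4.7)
\emph{Proof plan.}
The approach is to reduce the theorem to a verification on one explicit example via deformation theory, then carry out the verification on a Hilbert scheme of a K3 surface.

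\emph{Step 1 (Reduction via deformation).} Consider the moduli of pairs $(X, \ell)$ with $X$ a hyperk\"ahler variety of $K3^{[n]}$-type and $\ell \in H_2(X, \BZ)$ primitive with $(\ell, \ell) = -(n+3)/2$. By the global Torelli theorem together with Markman's description of the monodromy group of $K3^{[n]}$-type, the set of primitive vectors of fixed BBF-square forms a single monodromy orbit, so this moduli is connected. Over it, $L = (\ell, -) \in H^2(X, \BQ)$ and $\td_X$ are parallel sections of the relevant cohomological local systems, and hence $[\exp(L)\sqrt{\td_X}]_n$ is a parallel Hodge class of type $(n,n)$. On the Hodge locus $\CL$ where $\ell$ is moreover represented by a Lagrangian plane $P$, the class $[P]$ is likewise parallel, since Lagrangian planes are infinitesimally rigid,
\[
H^0(P, N_{P/X}) = H^0(\p^n, \Omega_{\p^n}) = 0,
\]
and so $P$ deforms uniquely with $(X,\ell)$ along $\CL$. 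Since $\CL$ is a nonempty connected Hodge locus in the connected moduli, it suffices to verify \eqref{90sifsdf} on a single fiber.

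\emph{Step 2 (Explicit model).} Take $S$ a projective K3 surface containing a smooth rational curve $C$ and set $X = S^{[n]}$, $P = C^{[n]} \cong \mathrm{Sym}^n \p^1 \cong \p^n$. In the decomposition $H^2(S^{[n]}, \BZ) \cong H^2(S, \BZ) \oplus \BZ \delta$, with $2\delta$ the class of the Hilbert-Chow exceptional divisor, the class $L$ has an explicit form as a rational combination of $[C]$ and $\delta$ forced by the BBF pairing $(L, L) = -(n+3)/2$.

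\emph{Step 3 (Direct verification on $S^{[n]}$).} In the Fock-space description of $\bigoplus_m H^\ast(S^{[m]}, \BQ)$ via Nakajima operators, the class $[C^{[n]}]$ equals the monomial $\frac{1}{n!}\alpha_{-1}([C])^n \vacuum$. The classes $L$, its powers $L^k$, and the Todd class $\td_{S^{[n]}}$ (by the formula of Ellingsrud-G\"ottsche-Lehn, or its reformulation by Boissi\`ere) are all expressible in terms of Nakajima/tautological operators. Substituting these expressions into the right-hand side of \eqref{90sifsdf}, using $C^2 = -2$ and $\chi(\CO_S) = 2$, and simplifying via the standard Fock-algebra identities should reduce the claim to the monomial identity $\frac{1}{n!}\alpha_{-1}([C])^n \vacuum = [C^{[n]}]$.

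\emph{Main obstacle.} The principal technical difficulty is Step 3: the tautological/Nakajima expression for $\sqrt{\td_{S^{[n]}}}$ is combinatorially intricate, and matching the expansion of $\exp(L)\sqrt{\td_{S^{[n]}}}$ against the simple monomial $\alpha_{-1}([C])^n \vacuum$ requires exploiting precisely the specific form of $L$, the $(-2)$-relation $C^2 = -2$, and the operator calculus of Lehn and Nakajima. An alternative would be to characterize $[P]$ by its intersection numbers with $H^2$-monomials via generalized Fujiki relations and to compute the corresponding Fujiki constants of the monodromy-invariant components $[\sqrt{\td_X}]_{n-k}$; this bypasses Nakajima combinatorics but requires a supplementary argument that no $\Mon(X)$-non-invariant component of $[P]$ obstructs the comparison.
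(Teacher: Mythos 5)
Your Steps 1 and 2 reproduce the paper's reduction: by Bakker's classification there is a single monodromy/deformation orbit of pairs $(X,P)$ with $\ell$ primitive, so it suffices to verify the formula for $X=S^{[n]}$ and $P=C^{[n]}$ with $C\subset S$ a $(-2)$-curve, where $L=\theta([C])-\delta/2$. (One caveat: primitive classes of fixed BBF square do \emph{not} form a single monodromy orbit in general --- the residue in $H_2/H^2\cong\BZ/(2n-2)\BZ$ is a further invariant --- so you genuinely need Bakker's theorem about Lagrangian planes rather than the bare lattice statement; this is also what the paper invokes.)

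The gap is Step 3, which is where the entire content of the theorem sits and which you leave as an acknowledged ``obstacle'' rather than a proof. Two concrete problems. First, the identity you propose to reduce to is false: by Grojnowski--Nakajima one has $\sum_n [C^{[n]}]=\exp\bigl(\sum_{m\ge 1}\tfrac{(-1)^{m-1}}{m}\Fq_m([C])\bigr)\vacuum$, so already $[C^{[2]}]=\tfrac12\Fq_1([C])^2\vacuum-\tfrac12\Fq_2([C])\vacuum$, which differs from $\tfrac12\Fq_1([C])^2\vacuum$; the higher Nakajima operators contribute because $C^{[n]}$ contains non-reduced subschemes. Second, the direct expansion of $\exp(L)\sqrt{\td_{S^{[n]}}}$ in Nakajima operators has no usable closed form for general $n$, and the paper's method is designed precisely to avoid it. The actual argument replaces your Step 3 by: (i) the derived-category computation $\ST_{\CO_S}^{[n]}(\CL_n\otimes\CO(-\delta))=\iota_{\ast}\omega_{\p^n}$, proved via Krug's description of $\wedge^k\CL^{[n]}$ under the BKR equivalence and Koszul resolutions, which on Mukai vectors gives $(F^{[n]})^H v(\CL_n\otimes\CO(-\delta))=v(\iota_{\ast}\omega_{\p^n})$ for $F=\ST_{\CO_S}$; and (ii) Markman's theorem that the conjugate $\Theta_n(F^H)=B_{\delta/2}\circ (F^H)^{[n]}\circ B_{-\delta/2}$ is the integrated LLV action of $\iota(F^H)$, hence is degree-reversing and fixes every monomial $\Fq_{k_1}([C])\cdots\Fq_{k_\ell}([C])\vacuum$ and therefore $[C^{[n]}]$. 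Extracting the degree-$n$ component then yields $[v(\CL_n\otimes\CO(-\delta/2))]_n=[C^{[n]}]$. Your proposed alternative via Fujiki constants runs into exactly the issue you flag yourself: a priori $[P]$ need not lie in the span of monodromy-invariant classes and powers of $L$, so its intersection numbers with $H^2$-monomials do not determine it.
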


By Bakker \cite[Remark 28]{Bakker} there exists cases where the line class $\ell$ is imprimitive.
The expectation that the formula for $[P]$ is independent of the divisibility of $\ell$ is remarkable.
It suggests connections to the independence of Gromov-Witten invariants
of $K3^{[n]}$-type hyperk\"ahler varieties from the divisibility proven in \cite{QuasiK3}. We discuss this further in Section~\ref{sec:open questions}.
There we also comment on Song's formula for the case of arbitrary hyperk\"ahler varieties
and potential lifts to the Chow ring.
The remaining sections of the paper deal with preliminaries (Section~\ref{sec:preliminaries}) and the proof (Section~\ref{sec:proof}).
The appendix contains a discussion of Mukai vectors for orbifolds.

To hint at the proof of Theorem~\ref{thm:main},
let us write $\CO(L)$ for the (fractional) line bundle with first Chern class $L$.
Let $v(E) = \ch(E) \sqrt{\td}_X$ be the Mukai vector of an object $E \in D^b(X)$.
Then \eqref{90sifsdf} can be rewritten as
\[ [P] = \left[ v(\CO(L)) \right]_{n}. \]
This suggests that auto-equivalences should play a key role in the proof, and this is indeed the case.
More precisely, our proof here is heavily inspired by ideas of Beckmann in \cite[Sec.7]{Beckmann1}.
We can obtain stronger results as in \cite{Beckmann1} (where only the projection of $[P]$ to the Verbitsky component was computed)
by combining a recent result of Markman \cite{Markman2} with a result of the author \cite{OLLV}
to describe how autoequivalences of $S^{[n]}$ induced by autoequivalences of $S$ using Ploog's construction \cite{Ploog} act on cohomology.
Otherwise the strategy is parallel, and in particular Section~\ref{sec:key computation} is a detailed exposition of a computation in \cite[Prop.7.2]{Beckmann1}.

\subsection{Acknowledgements}
I thank Jieao Song for sharing his conjecture early on and Thorsten Beckmann and Eyal Markman for discussions on hyperk\"ahler varieties.
The idea that Theorem~\ref{thm:main} can be proven along the lines presented here was also observed independently by Beckmann.
The author is funded by the Deutsche Forschungsgemeinschaft (DFG) - OB 512/1-1.

\section{Preliminaries}
\label{sec:preliminaries}
\subsection{The LLV algebra and Mukai lattice}
Let $X$ be a (irreducible) hyperk\"ahler variety.
For $\lambda \in H^2(X,\BZ)$ we let
$e_{\lambda} \in \End H^{\ast}(X)$ be the operator of cup product with $\lambda$.
Let $h \in \End H^{\ast}(X)$ be the grading operator which acts on $H^i(X)$ by multiplication by $i - \dim(X)$.
The Looijenga-Lunts-Verbitsky (LLV) Lie algebra $\Fg(X)$ is the Lie subalgebra of $\End H^{\ast}(X)$
generated by all Lefschetz triples $(e_{\lambda}, h, f_{\lambda})$ for all $\lambda \in H^2(X)$ with $(\lambda, \lambda)>0$,
where $f_{\lambda} \in \End H^{\ast}(X)$ is the unique operator such that $[e_{\lambda}, f_{\lambda}]=h$ and $[h,f_{\lambda}]=-2 f_{\lambda}$,
see \cite{LL,V}.

The LLV lattice of $X$ is the vector space
\[ \widetilde{H}(X,\BQ) = \BQ \alpha \oplus H^2(X,\BQ) \oplus \BQ \beta \]
equipped with the intersection pairing which restricts to the BBF form on $H^2(X,\BQ)$, lets $\alpha,\beta$ be orthogonal to $H^2(X,\BQ)$
and satisfies $(\alpha,\alpha) = (\beta,\beta) =0 $ and $(\alpha, \beta)=-1$.
For $X$ a K3 surface, we have the natural identification $\widetilde{H}(X) = H^{\ast}(S,\BQ)$ where $\alpha = 1$ is the unit, and $\beta = \pt \in H^4(S,\BZ)$ is the class of a point.
In higher dimension the geometric meaning of $\widetilde{H}(X,\BQ)$ is more subtle, see \cite{Talman, Beckmann1, Markman1} for more details.

Let $\DMon(X) \subset O(H^{\ast}(X,\BZ))$ be the 
subgroup generated by all morphism $\gamma_2^{-1} \circ F^H \circ \gamma_1$ where
$\gamma_i : H^{\ast}(X) \to H^{\ast}(X_i)$ are parallel transport operators,
and $F : D^b(X_1) \to D^b(X_2)$ is an auto-equivalence.
By work of Talman \cite{Talman} there is a natural functor
\[ \widetilde{H} : \DMon(X) \to O(\widetilde{H}(X,\BQ)). \]
which controls the action of $\DMon(X)$ on the subring of $H^{\ast}(X,\BQ)$ generated by $H^2(X,\BQ)$.

\subsection{Hilbert schemes}
Let $S$ be a K3 surface and let $S^{[n]}$ be the Hilbert scheme of $n$ points on $S$.
There is a natural isomorphism
\[ H^2(S,\BZ) \oplus \BZ \delta \xrightarrow{\cong} H^2(S^{[n]},\BZ), \quad (\lambda,k \delta) \mapsto \theta(\lambda) + k \delta \]
where $2 \delta$ is the class of the locus of non-reduced subschemes, and
\[ \theta : H^2(S,\BZ) \cong H^2(S^{(n)},\BZ) \to H^2(S^{[n]},\BZ) \]
is the canonical morphism, where the second map is given by pullback along the Hilbert-Chow morphism $S^{[n]} \to S^{(n)}$.
%and we let $\lambda_n$ be the pullback of $\lambda \in H^2(S,\BZ) \cong H^2(S^{(n)},\BZ)$ under the Hilbert-Chow morphism
%$\mathrm{HC} : S^{[n]} \to S^{(n)}$.
%
If $\CL \in \Pic(S)$ is a line bundle, we let $\CL_n \in \Pic(S^{[n]})$ be the line bundle
with $c_1(\CL_n) = \theta(c_1(\CL))$.
% then $\CL^{\boxtimes n}$ has a canonical $S_n$-linearization and descends to a line bundle $\CL^{(n)} \in \Pic(S^{(n)})$.
%We let $\CL_n = \mathrm{HC}^{\ast}(\CL^{(n)})$, so that $c_1(\CL_n) = c_1(\CL)_n$.

Let also $\Xi_n \subset S^{[n]} \times S$ be the universal family of the Hilbert scheme,
and let $p,q$ be the projection of $S^{[n]} \times S$ to the factors.
Given a vector bundle $V$ on $S$, we let
\[ V^{[n]} := p_{\ast}( q^{\ast}(V) \otimes \CO_{\Xi_n} ) \]
be the tautological bundle associated to $V$.
%If $V$ is of rank $r$ then $V^{[n]}$ is of rank $rn$.
If $\CL$ is a line bundle, then $\det( \CL^{[n]}) = \CL_n \otimes \CO(-\delta)$.

\subsection{The BKR equivalence} \label{section:BKR}
By Haiman's work \cite{Haiman} the Hilbert scheme $S^{[n]}$
is isomorphic to the Nakamura $G$-Hilbert scheme $\Hilb^G(S^n)$ for the permutation action of $G:=S_n$ on $S^n$.
%\[ S^{[n]} \cong . \]
Let $Z_n \subset S^{[n]} \times S^n$ be the universal family of $\Hilb^G(S^n)$.
By construction, $Z_n$ is is $G$-equivariant, and finite and flat of degree $n!$ over $S^{[n]}$.
The Bridgeland-King-Reid equivalence \cite{BKR} which we use here in the convention of Krug \cite{Krug} is:
\[ \Psi : D_G(S^n) \xrightarrow{\cong} D^b(S^{[n]}), \quad \CF \mapsto \pr_{2\ast}( \pr_1^{\ast}(\CF) \otimes \CO_Z )^G \]
where $D_G(S^n)$ is the equivariant category of $D^b(S^n)$ with respect to the $G$-action.

Ploog and Sosna \cite{Ploog, PS} use $\Psi$ to construct an injective group homomorphism
\[ \Aut(D^b(S)) \times \BZ_2 \to \Aut( D^b(S^{[n]})), \quad (F,1) \mapsto F^{[n]}, \quad (\id, -1) \mapsto F_{\chi}. \]
We recall the construction.
Given an object $E \in D^b(Y)$ on a smooth projective variety $Y$,
the box product
\[ E^{\boxtimes n} = \pi_1^{\ast}(E) \otimes \cdots \otimes \pi_n^{\ast}(E) \in D^b(Y^n), \]
carries a natural $S_n$-linearization by assigning to $g \in S_n$ the isomorphism 
\begin{equation} \label{linearization}
\begin{aligned}
 g^{\ast}(E^{\boxtimes n}) & \cong \pi_{g(1)}^{\ast}(E) \otimes \cdots \pi_{g(n)}^{\ast}(E) \\
& \cong \pi_1^{\ast}(E) \otimes \cdots \otimes \pi_n^{\ast}(E) \\
& \cong E^{\boxtimes n},
\end{aligned}
\end{equation}
where $\pi_i$ are the projection of $Y^n$ to the $i$-th factor.
We let $(E^{\boxtimes n},1)$ denote this representation.
Let $\chi$ be the non-trivial character of $S_n$. We let 
$(E^{\boxtimes n},-1) = (E^{\boxtimes n},1) \otimes \chi$ be the $\chi$-twisted linearization.

Given a Fourier-Mukai transform $F = \FM_{\CE} : D^b(S) \to D^b(S)$ with Fourier-Mukai kernel $\CE \in D^b(S \times S)$,
we have an induced Fourier-Mukai kernel $(\CE^{\boxtimes n},1) \in D^b( (S \times S)^n )$,
and hence an induced autoequivalence
\begin{equation} F^{\boxtimes n} := \FM_{(\CE^{\boxtimes n},1)} : D_G(S^n) \to D_G(S^n), \quad A \mapsto \pr_{2 \ast}( \pr_1^{\ast}(A) \otimes (\CE^{\boxtimes n},1)) \label{induced box transform} \end{equation}
where the pullback, tensor product, and pushforward are taken equivariantly.
The first part of the Ploog-Sisna map is now defined by:
\begin{gather*}
F^{[n]} = \FM_{\CE}^{[n]} := \Psi \circ \FM_{\CE^{\boxtimes n}} \circ \Psi^{-1}.
\end{gather*}
Further, tensoring with $\chi$ 
commutes with $F^{\boxtimes n}$. We set
\[ F_{\chi} := \Psi \circ (\chi \otimes ( - )) \circ \Psi^{-1}. \]

\begin{rmk}
We adopt in \eqref{linearization} the natural sign convention for graded tensor products:
If $V,W$ are graded vector spaces and $a \in V$ and $b \in W$ are homogeneous, then under the isomorphism
$V \otimes W \cong W \otimes V$ we send $a \otimes b$ to $(-1)^{|a| |b|} b \otimes a$, where $|a|$ is the degree of $a$.

For example, $\CO_S[-1]$ is of odd degree, so we get
\begin{equation} (\CO_S[-1]^{\boxtimes n}, 1) \cong (\CO_S^{\boxtimes n},-1) [ -n ]. \label{odd equiv} \end{equation}
\end{rmk}

\begin{rmk}
For any $B \in D^b(S)$ and $F \in \Aut(D^b(S))$ we have by construction
\begin{equation} F^{\boxtimes n}\big( ( B^{\boxtimes n}, 1) \big) = (F(B)^{\boxtimes n}, 1). \label{compat} \end{equation}

For example, consider the spherical twist of $S$ along the structure sheaf $\CO_S$,
\[ \ST_{\CO_S} : D^b(S) \to D^b(S), \quad E \mapsto \mathrm{Cone}( R \Hom(\CO_S, E) \otimes \CO_S \to E ). \]
Then $\ST_{\CO_S}(\CO_S) = \CO_S[-1]$ hence using \eqref{odd equiv} we get that
\[ \ST_{\CO_S}^{\boxtimes n}( \CO_S^{\boxtimes n}, 1)= (\ST_{\CO_S}(\CO_S)^{\boxtimes n}, 1) = (\CO_S[-1]^{\boxtimes n}, 1) = (\CO_S^{\boxtimes n}, -1)[-n]. \]
\end{rmk}

\begin{rmk} \label{rmk:commute with tensor}
For a line bundle $\CL \in \Pic(S)$ let $F_{\CL} = \CL \otimes (- )$ be
the autoequivalence acting by tensor product with $\CL$.
Then as explained in \cite[Remark 3.11]{Krug} we have
\[ F_{\CL}^{[n]} = \CL_n \otimes ( - ). \]
\end{rmk}

\subsection{The BKR equivalence in cohomology} \label{subsec:BKR in cohomology}
We also have a cohomological version of Ploog's construction.
Let $H^{\ast}_G(S^n)$ be the orbifold cohomology of $S^n$,
for which we refer to Appendix~\ref{sec:orbifold mukai}.
Then as discussed in Section~\ref{sec:appendix BKR} $\Psi$ induces an isomorphism on cohomology
\[ \Psi^H : H^{\ast}_G(S^n) \xrightarrow{\cong} H^{\ast}(S^{[n]}). \]
We define
\[ \End(H^{\ast}(S)) \to \End(H^{\ast}(S^{[n]})), \quad \phi \mapsto \phi^{[n]} := \Psi^H \circ \phi^{\boxtimes n} \circ (\Psi^{H})^{-1} \]
where $\phi^{\boxtimes n} : H_G(S^n) \to H_G(S^n)$ is the endomorphism induced by $\phi$, see Section~\ref{subsec:symmetric action on Sn}.

The following follows from the appendix:
\begin{lemma} \label{lemma:induced}
For any $F \in \Aut D^b(S)$, we have $(F^{[n]})^H = (F^H )^{[n]}$.
\end{lemma}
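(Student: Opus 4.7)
The plan is to reduce the identity to two compatibility statements and then use that conjugation commutes with taking cohomological realizations. Unwinding the definitions we have $F^{[n]} = \Psi \circ F^{\boxtimes n} \circ \Psi^{-1}$ and $(F^H)^{[n]} = \Psi^H \circ (F^H)^{\boxtimes n} \circ (\Psi^H)^{-1}$, so it suffices to establish (i) that the cohomological realization of $\Psi$ equals $\Psi^H$, and (ii) that the cohomological realization of the equivariant transform $F^{\boxtimes n}$ equals $(F^H)^{\boxtimes n}$ as an endomorphism of the orbifold cohomology $H^{\ast}_G(S^n)$. Given these, taking cohomology of the conjugation expression yields
\[ (F^{[n]})^H = \Psi^H \circ (F^{\boxtimes n})^H \circ (\Psi^H)^{-1} = \Psi^H \circ (F^H)^{\boxtimes n} \circ (\Psi^H)^{-1} = (F^H)^{[n]}, \]
as desired.

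For step (i) I would appeal directly to Section~\ref{sec:appendix BKR}, where $\Psi^H$ is by construction the cohomological Fourier-Mukai transform attached to the kernel $\CO_{Z_n}$ of $\Psi$, now interpreted with respect to the orbifold cohomology of $S^n$. Thus the definition is tailored so that $(\Psi)^H = \Psi^H$.

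For step (ii) I would write $F = \FM_{\CE}$, so that $F^{\boxtimes n} = \FM_{(\CE^{\boxtimes n},1)}$ as an equivariant Fourier-Mukai transform on $D_G(S^n)$. The cohomological Fourier-Mukai formula expresses $F^H(\alpha) = \pi_{2\ast}(v(\CE) \cdot \pi_1^{\ast}\alpha)$ in terms of the Mukai vector, and its equivariant analogue on orbifold cohomology realizes $(F^{\boxtimes n})^H$ as the transform with kernel the orbifold Mukai vector of $(\CE^{\boxtimes n},1)$. The content of the appendix is precisely that this orbifold Mukai vector is $(v(\CE)^{\boxtimes n},1)$ with its natural $S_n$-linearization, which matches the definition of $(F^H)^{\boxtimes n}$ from Section~\ref{subsec:BKR in cohomology} via the construction of Section~\ref{subsec:symmetric action on Sn}.

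The main obstacle will be the bookkeeping in (ii): the $S_n$-linearization on the sheaf side involves the graded-commutative sign convention from the remark preceding \eqref{odd equiv}, and one must check that the analogous convention on Mukai vectors (tracking parities from $\ch$-components of odd Hodge degree and from $\sqrt{\td}$) produces precisely the cohomological $S_n$-action used to define $(F^H)^{\boxtimes n}$. Once this parity match is confirmed in the appendix, the rest of the proof is formal, resting only on the general principle that $F \mapsto F^H$ is multiplicative and compatible with both box-product kernels and conjugation.
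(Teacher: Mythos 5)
Your proposal is correct and takes essentially the same route as the paper, whose proof of this lemma is simply a pointer to the appendix for precisely the two compatibilities you isolate: $\Psi^H$ is by construction the cohomological correspondence attached to the kernel of $\Psi$, and the lemma of Section~\ref{subsec:symmetric action on Sn} identifies $\FM_{v(\CE^{\boxtimes n},1)}$ with the induced action $(F^H)^{\boxtimes n}$. One small correction to your step (ii): the orbifold Mukai vector of $(\CE^{\boxtimes n},1)$ is \emph{not} the naive box product of $v(\CE)$ --- on the twisted sector of a permutation of cycle type $(k_1,\dots,k_\ell)$ it involves the Adams operations $\psi^{k_i}$ and normalization factors coming from the Bott cannibalistic classes (Proposition~\ref{prop:g comp} and \eqref{multiplicativity}) --- and it is these twists, rather than any graded-sign bookkeeping, that account for the conjugations $\psi^{k}(\phi)=k^{h/2}\circ\phi\circ k^{-h/2}$ built into the definition of $(F^H)^{\boxtimes n}$.
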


Recall that $\DMon(S) = O^{+}(H^{\ast}(S,\BZ))$ and consider the natural embedding
\[ \iota : O(H(S,\BQ)) \to O(\widetilde{H}(S^{[n]},\BQ)), \quad g \mapsto \iota(g) \]
where we let $\iota(g)$ act on the image of $\id_{\alpha} \oplus \theta \oplus \id_{\beta} : H^{\ast}(S,\BQ) \subset \widetilde{H}(S^{[n]},\BQ)$ by $g$
and by the identity on the orthogonal complement.

%Let $\delta \in H^2(S^{[n]})$ be defined by $2 \delta = [\Delta_{S^{[n]}}]$ where $\Delta_{S^{[n]}}$ is the locus of non-reduced subschemes.
\begin{lemma} \label{lemma:Htilde conjugate}
$\widetilde{H}( \phi^{[n]} ) = \det(\phi)^{n+1} B_{-\delta/2} \circ \iota(\phi) \circ B_{\delta/2}$
\end{lemma}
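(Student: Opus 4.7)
Both sides of the asserted identity are multiplicative in $\phi$: the functor $\widetilde{H}$ preserves composition, the map $\iota$ is a group homomorphism, conjugation by $B_{\delta/2}$ respects composition, and the prefactor $\det(\phi)^{n+1}$ is itself multiplicative. It therefore suffices to verify the formula on a generating set of $\DMon(S) = O^+(H^{\ast}(S,\BZ))$, and by Lemma~\ref{lemma:induced} we may assume $\phi = F^H$ for some $F \in \Aut(D^b(S))$ and use Ploog's explicit presentation of $F^{[n]}$.

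A convenient generating set consists of the shift $[1]$, tensoring with line bundles $F_{\CL} = \CL \otimes (-)$ for $\CL \in \Pic(S)$, and the spherical twist $\ST_{\CO_S}$. For $F_{\CL}$, Remark~\ref{rmk:commute with tensor} yields $F_{\CL}^{[n]} = \CL_n \otimes (-)$, so $\widetilde{H}(F_{\CL}^{[n]})$ is the $B$-field shift by $c_1(\CL_n) = \theta(c_1 \CL)$. On the right we have $\det\phi = 1$, and $\iota(\phi)$ is the $B$-field shift by $\theta(c_1 \CL)$ inside $\widetilde{H}(S^{[n]},\BQ)$; since $\theta(c_1 \CL)$ is BBF-orthogonal to $\delta$, this operator commutes with $B_{\delta/2}$, and the two sides agree. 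The shift $[1]$ contributes the sign $\det([1]^H)^{n+1} = (-1)^{n+1}$, which matches the $(-1)^n$ arising from the graded-sign convention in \eqref{odd equiv}.

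The essential case is the spherical twist $\ST_{\CO_S}$, where all three features of the right-hand side come into play. Here Remark~\ref{rmk:commute with tensor} gives $\ST_{\CO_S}^{\boxtimes n}(\CO_S^{\boxtimes n},1) = (\CO_S^{\boxtimes n},-1)[-n]$, so the cohomological action of $\ST_{\CO_S}^{[n]}$ couples a spherical reflection across $v(\CO_S)$ with the character twist $F_\chi$ and a shift. Using the appendix's description of the BKR isomorphism $\Psi^H$, the author's description in \cite{OLLV} of how such Ploog-induced autoequivalences act on the Verbitsky component, and Markman's normalization of $\widetilde{H}$ from \cite{Markman2}, one can compute the induced action on $\widetilde{H}(S^{[n]},\BQ)$ and verify that it equals $B_{-\delta/2} \iota(\ST_{\CO_S}^H) B_{\delta/2}$ up to the required sign. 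The principal obstacle is the careful sign bookkeeping: tracking how the $(-1)^n$ from \eqref{odd equiv} combines with Markman's sign conventions to produce precisely the prefactor $\det(\phi)^{n+1}$, and verifying that the only off-diagonal correction to $\iota(\phi)$ along the $\delta$-direction is the one supplied by the $B$-field conjugation, with no additional contribution arising from the orbifold-cohomology identification.
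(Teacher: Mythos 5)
The paper offers no proof of this lemma beyond citing \cite[Thm.~7.4]{Beckmann1} and \cite[Thm.~12.2]{Markman1}, and those references do proceed roughly as you propose: both sides are multiplicative in $\phi$, so one checks the identity on generators, with the spherical twist as the essential case. Your overall strategy is therefore the right one, but as written the proposal contains both a concrete error and a genuine gap. The error is in the one sign computation you actually commit to: $[1]^H=-\id$ on the $24$-dimensional space $H^{\ast}(S,\BQ)$, so $\det([1]^H)^{n+1}=((-1)^{24})^{n+1}=+1$, not $(-1)^{n+1}$; and in any case $(-1)^{n+1}\neq(-1)^n$, so your claim that these two signs ``match'' is internally inconsistent. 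The shift case is not a sign count at all: since $[1]^{[n]}=F_{\chi}\circ[n]$ by \eqref{odd equiv}, verifying the lemma for $\phi=-\id$ amounts to computing the isometry $\widetilde{H}(F_{\chi})$, which is \emph{not} $\pm\id$ (the right-hand side of the lemma for $\phi=-\id$ sends $\alpha\mapsto-\alpha+\delta+(n-1)\beta$), and this computation is absent.

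The gap is that the entire content of the lemma --- the conjugation by $B_{\delta/2}$ and the prefactor $\det(\phi)^{n+1}$, which are invisible in the line-bundle case --- is concentrated in the spherical-twist case, and there your argument reduces to ``one can compute \dots and verify.'' Knowing $\ST_{\CO_S}^{\boxtimes n}(\CO_S^{\boxtimes n},1)=(\CO_S^{\boxtimes n},-1)[-n]$ (which comes from \eqref{compat} and \eqref{odd equiv}, not from Remark~\ref{rmk:commute with tensor}) only determines the image of a single object; to pin down the isometry $\widetilde{H}(\ST_{\CO_S}^{[n]})$ one needs Taelman's characterization of $\widetilde{H}$ --- how $\ST_{\CO_S}^{[n]}$ conjugates the LLV algebra together with the image of the extended Mukai vector of $\CO_{S^{[n]}}$ --- or equivalently the Nakajima-operator description as in Corollary~\ref{cor:Markman}; none of this is carried out. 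Separately, the generating-set claim needs justification: $\DMon(S)=O^{+}(H^{\ast}(S,\BZ))$ is not generated by shifts, twists by line bundles on a fixed $S$, and $\ST_{\CO_S}$ alone; you must at least adjoin the parallel-transport operators $O^{+}(H^2(S,\BZ))$ (an easy but necessary additional check) and then invoke an Eichler-type argument, or else reduce to a Lie-algebra computation as in Proposition~\ref{prop:Markman} --- but the latter requires first showing that $\phi\mapsto\widetilde{H}(\phi^{[n]})$ extends to an algebraic map on $\SO(H^{\ast}(S,\BC))$, which is not automatic since $\widetilde{H}$ is only defined on the discrete group $\DMon(S^{[n]})$.
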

\begin{proof}
This can be found in \cite[Theorem 7.4]{Beckmann1} or \cite[Theorem 12.2]{Markman1}.
%see also \cite[Lemma 7.2]{Markman2}.
\end{proof}

This leads to the following useful result, proven by Markman recently in \cite{Markman2}.
Let
\[ \rho : \SO(\widetilde{H}(S^{[n]},\BC)) \to \SO(H^{\ast}(S^{[n]},\BC)) \]
be the (integrated) LLV respresentation.
Define the conjugated morphism:
\[ \Theta_n(\phi) = B_{\delta/2} \circ \phi^{[n]} \circ B_{-\delta/2}. \]
\begin{prop} \label{prop:Markman} For any $g \in \DMon(S)$ of determinant $1$ we have:
\[ \Theta_n(g) = \rho( \iota(g)). \]
\end{prop}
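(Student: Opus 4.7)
Both $\Theta_n$ and $\rho \circ \iota$ are manifestly group homomorphisms on the subgroup $\{g \in \DMon(S) : \det(g)=1\}$, so it suffices to verify the identity on a generating set. I would work with generators realized by concrete derived autoequivalences of $S$: the $B$-field shifts $B_\lambda$ coming from tensor with a line bundle $\CL$ (with $\lambda = c_1(\CL)$), the Hodge isometries induced by automorphisms of $S$, and a single ``swap'' element (exchanging $\alpha \leftrightarrow \beta$ in the hyperbolic summand of $\widetilde{H}(S)$, up to sign) realized through the spherical twist $\ST_{\CO_S}$.

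As a first consistency check, both sides induce the same orthogonal transformation $\iota(g)$ on the LLV lattice $\widetilde{H}(S^{[n]},\BQ)$: for the left-hand side this follows from Lemma~\ref{lemma:Htilde conjugate} applied with $\det(g)=1$, and for the right-hand side it is the defining property of $\rho$. The real content of the proposition is thus upgrading this equality from the rank-$(b_2+2)$ LLV lattice to the full cohomology $H^*(S^{[n]},\BC)$.

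The $B$-field case is a direct computation. By Remark~\ref{rmk:commute with tensor}, $F_\CL^{[n]} = \CL_n \otimes (-)$, whose cohomological action is cup product with $\exp(\theta(\lambda))$. Since cup-product operators commute, $\Theta_n(B_\lambda)$ is again cup product with $\exp(\theta(\lambda))$, and this agrees with $\rho(\iota(B_\lambda))$ because the integrated LLV representation exponentiates the Lefschetz operator $e_{\theta(\lambda)}$ to cup product with $\exp(\theta(\lambda))$. The automorphism-of-$S$ case is handled analogously by functoriality of the Hilbert scheme construction.

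The decisive and hardest step is the swap generator. Here one rewrites, using Lemma~\ref{lemma:induced},
\[ \Theta_n(\ST_{\CO_S}^H) = B_{\delta/2} \circ \Psi^H \circ (\ST_{\CO_S}^H)^{\boxtimes n} \circ (\Psi^H)^{-1} \circ B_{-\delta/2}, \]
and computes the right-hand side within the orbifold-cohomology framework of Appendix~\ref{sec:orbifold mukai}. Combined with the explicit description of $\ST_{\CO_S}^H$ on $H^*(S)$ and the author's computation in \cite{OLLV}, this yields precisely the integrated LLV action of the corresponding swap in $O(\widetilde{H}(S^{[n]},\BQ))$. The main obstacle is exactly this equivariant/orbifold analysis of the Ploog-induced spherical twist, since it is what controls the action of the autoequivalence on the ``non-Verbitsky'' part of $H^*(S^{[n]})$ — something not visible at the level of Lemma~\ref{lemma:Htilde conjugate} alone. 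Combining all cases via the homomorphism property completes the proof.
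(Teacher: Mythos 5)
Your proposal diverges from the paper's argument in a way that leaves two genuine gaps.

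First, the reduction to a generating set of the discrete group $\{g\in\DMon(S):\det(g)=1\}=\SO^{+}(H^{\ast}(S,\BZ))$ is not justified, and the generating set you list is insufficient as stated: for a generic $S$ the isometries induced by $\Aut(S)$ are trivial, and whether $B$-field shifts, $\Mon(S)$, and a \emph{single} spherical-twist ``swap'' generate this arithmetic group is a nontrivial lattice-theoretic assertion (an Eichler-type generation statement) that you neither prove nor cite. The paper sidesteps this entirely: since $g\mapsto g^{\boxtimes n}$, hence $\Theta_n$, extends to a homomorphism on the connected group $\SO(H^{\ast}(S,\BC))$, as does $\rho\circ\iota$, it suffices to compare differentials at the identity on the Lie-algebra generators $e_{\lambda},f_{\lambda}$ of $\so(H^{\ast}(S,\BC))$, where generation is immediate.

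Second, and more seriously, your ``decisive and hardest step'' is a placeholder rather than a proof: you assert that an orbifold-cohomology computation of $\Theta_n(\ST_{\CO_S}^{H})$ on the full cohomology ``yields precisely'' $\rho$ of the swap. Carrying this out would require an exact description of $\Psi^{H}$ in Nakajima operators (signs and normalizations included, whereas Appendix~\ref{sec:appendix BKR} only establishes an upper-triangularity statement) and an explicit expression for $\rho(\iota(\ST_{\CO_S}^{H}))$, i.e.\ writing the swap as a word in exponentials of $e_{\mu},f_{\mu}$; neither is routine. The paper's treatment of this step is structurally different and avoids computing $\Theta_n(\ST_{\CO_S}^{H})$ on $H^{\ast}(S^{[n]})$ altogether: setting $\phi=\ST_{\CO_S}^{H}$, it uses Taelman's theorem (via Lemma~\ref{lemma:induced}) that $\phi^{[n]}$ normalizes the LLV algebra, together with Lemma~\ref{lemma:Htilde conjugate} to identify the induced element of $O(\widetilde{H})$, obtaining the single conjugation identity $\Theta_n(\phi)^{-1}e_{\theta(\lambda)}\Theta_n(\phi)=\tfrac{-(\lambda,\lambda)}{2}f_{\theta(\lambda)}$; combined with $\phi^{-1}e_{\lambda}\phi=\tfrac{-(\lambda,\lambda)}{2}f_{\lambda}$ on $S$ and the homomorphism property, this gives $\tfrac{d}{dt}\big|_{t=0}\Theta_n(\exp(tf_{\lambda}))=f_{\theta(\lambda)}=\rho(\iota(f_{\lambda}))$. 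Your plan never invokes Taelman's intertwining result, which is precisely the input that makes the hard case tractable; only your handling of the $B$-field shifts (equivalently, of $e_{\lambda}$) matches the paper's.
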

\begin{proof}
This is proven in \cite{Markman2} but we sketch the proof here for convenience.
The claimed equality makes sense for all of $\SO(H^{\ast}(S,\BC))$, so we only need to check the derivative at the origin.
Moreover, it suffice it to check it on the generators $e_{\lambda}$ and $f_{\lambda}$
of the tangent space $\mathfrak{so}(H^{\ast}(S,\BC))$.
For $e_{\lambda}$ this is clear by Remark~\ref{rmk:commute with tensor}.
Hence it suffices to compute the derivative of $\Theta_n(\exp(f_{\lambda} t))$ at $t=0$
and check that it equals $f_{\theta(\lambda)}$.

Consider the action on cohomology $\phi := ( \ST_{\CO_S} )^{H}$ of the spherical twist $\ST_{\CO_S}$.
Then $\phi$ acts by the identity on $H^2(S)$ and sends $1, \pt$ to $-\pt, 1$ respectively. % where $\pt \in H^4(S,\BZ)$ is the class of a point.
Hence in $\mathfrak{so}(H^{\ast}(S,\BC))$ we have
\[ \phi^{-1} e_{\lambda} \phi = \frac{- (\lambda, \lambda)}{2} f_{\lambda}. \]
Further by Lemma~\ref{lemma:induced} we know that $\phi^{[n]}$ is the induced action of an auto-equivalence on $D^b(S^{[n]})$.
Thus by the main result of \cite{Talman} $\phi^{[n]}$ and hence $\Theta_n(\phi)$ conjugates elements of the LLV algebra $\Fg(S^{[n]})$.
Together with Lemma~\ref{lemma:Htilde conjugate} one finds by the $\rho$-equivariance of $\widetilde{H}$ that:
\[
\Theta_n(\phi)^{-1} e_{\theta(\lambda)} \Theta_n(\phi) = \frac{-(\lambda,\lambda)}{2} f_{\theta(\lambda)}.
\]
We get that:
\begin{align*}
\frac{d}{dt}\Big|_{t=0} \Theta_n(\exp(f_{\lambda} t))
& = \frac{d}{dt}\Big|_{t=0} \Theta_n\left(\exp \left( \frac{-2t}{(\lambda,\lambda)} \phi^{-1} e_{\lambda} \phi \right) \right) \\
%& = \frac{d}{dt}\Big|_{t=0} \Theta_n\left( \phi^{-1} \exp \left( \frac{-2t}{(\lambda,\lambda)} e_{\lambda} \right) \phi \right) \\
& = \frac{d}{dt}\Big|_{t=0} \Theta_n(\phi^{-1}) \Theta_n\left( \exp \left( \frac{-2t}{(\lambda,\lambda)} e_{\lambda} \right) \right) \Theta_n(\phi) \\
& = \frac{d}{dt}\Big|_{t=0} \Theta_n(\phi^{-1}) \exp\left( \frac{-2t}{(\lambda,\lambda)} e_{\theta(\lambda)} \right) \Theta_n(\phi) \\
& = \Theta_n(\phi^{-1})\left[ \frac{d}{dt}\Big|_{t=0} \exp\left( \frac{-2t}{(\lambda,\lambda)} e_{\theta(\lambda)} \right) \right] \Theta_n(\phi) \\
& = \Theta_n(\phi^{-1})\frac{-2}{(\lambda,\lambda)} e_{\theta(\lambda)} \Theta_n(\phi) \\
& = f_{\theta(\lambda)} \\
& = \rho( \iota(f_{\lambda}) )
\end{align*}

\end{proof}

To obtain the description of $\Theta_n(g)$ for all $g \in \DMon(S)$, also of determinant $-1$,
we can use the following argument:
Let $h \in \Mon(S)$ be any monodromy operator of $S$ with determinant $-1$.
Then for any $g \in \DMon(S)$ of determinant $1$ 
Proposition~\ref{prop:Markman} determines $\Theta_n(gh)$.
Moreover, $\Theta_n(h) = h^{[n]}$ is just the induced parallel transport operator.
Hence:
\begin{equation} \label{abc} \Theta_n(g) = \Theta_n(gh) \Theta_n(h)^{-1} = \rho( \widetilde{\iota}(gh) ) \circ (h^{[n]})^{-1}. \end{equation}

For $k \geq 1$ and $\gamma \in H^{\ast}(S)$ consider the Nakajima creation operators \cite{Nak} (here in the convention of \cite{NOY})
\[ \Fq_k(\gamma) : H^{\ast}(S^{[m]}) \to H^{\ast}(S^{[m+k]}) \]
and let $\vacuum \in H^{\ast}(S^{[0]})$ be the unit ('the vacuum vector'). We find the following description:
\begin{cor}  \label{cor:Markman}
For any $g \in \DMon(S)$ we have
\[ \Theta_n(g) \left( \Fq_{k_1}(\gamma_1) \cdots \Fq_{k_{\ell}}(\gamma_{\ell}) \vacuum \right) = 
\Fq_{k_1}( \psi_{k_1}(g) \gamma_1) \cdots \Fq_{k_{\ell}}( \psi_{k_{\ell}}(g)  \gamma_{\ell}) \vacuum \]
where $\psi_n(g) = n^{h/2} \circ g \circ n^{-h/2}$ and we let $n^{h}(\gamma) = n^{h(\gamma)} \gamma$ for any homogeneous $\gamma$.
\end{cor}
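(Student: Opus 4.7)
The strategy is to combine Proposition~\ref{prop:Markman} with known commutator formulas for the LLV algebra on the Nakajima Fock space, and to handle determinant $-1$ via the reduction \eqref{abc}. Both sides of the claimed identity are multiplicative under composition in $g$, so by \eqref{abc} it suffices to verify the formula in two cases: (a) $g \in \DMon(S)$ with $\det g = 1$, and (b) $g = h$ a fixed parallel transport operator with $\det h = -1$.

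Case (b) is handled geometrically. Since $h^{[n]}$ is the parallel transport operator on $S^{[n]}$ induced from $h$, and the Nakajima operators $\Fq_k(\gamma)$ are defined by universal incidence correspondences invariant under monodromy, we have
\[ h^{[n]} \bigl( \Fq_{k_1}(\gamma_1) \cdots \Fq_{k_\ell}(\gamma_\ell) \vacuum \bigr) = \Fq_{k_1}(h \gamma_1) \cdots \Fq_{k_\ell}(h \gamma_\ell) \vacuum. \]
Since $h$ is a parallel transport operator on a K3 surface it preserves the cohomological grading and therefore commutes with $k^{h/2}$, giving $\psi_k(h) = h$ for every $k$. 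The formula then follows from $\Theta_n(h) = h^{[n]}$.

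Case (a) is the substantive part. By Proposition~\ref{prop:Markman} we have $\Theta_n(g) = \rho(\iota(g))$, so the task reduces to determining how the integrated LLV representation $\rho \circ \iota$ of $\SO(H^{\ast}(S,\BC))$ acts on Nakajima monomials. Since $\SO(H^{\ast}(S,\BC))$ is connected and generated by the one-parameter subgroups $\exp(t e_\lambda)$ and $\exp(t f_\lambda)$ with $\lambda \in H^2(S,\BC)$ of positive square, it is enough to check the infinitesimal identity
\[ [\rho(\iota(X)), \Fq_k(\gamma)] = \Fq_k\bigl( \dot\psi_k(X) \gamma \bigr), \qquad \dot\psi_k(X) = k^{h/2} \circ X \circ k^{-h/2}, \]
for $X = e_\lambda$ and $X = f_\lambda$, and to verify that the resulting operator formula integrates to the claimed group-level statement (an exponential of commutators gives the asserted action on each tensor factor separately). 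For $X = e_\lambda$ one has $\rho(\iota(e_\lambda)) = e_{\theta(\lambda)}$, the cup-product operator by $\theta(\lambda) \in H^2(S^{[n]})$, and $\dot\psi_k(e_\lambda) = k \cdot e_\lambda$; the required commutator formula is standard in the Nakajima calculus on $S^{[n]}$ and is the content of the LLV-theoretic computations in \cite{OLLV}. The case $X = f_\lambda$ is dual (or follows by applying the $\mathfrak{sl}_2$-triple relations inside the LLV algebra, noting that the cohomological degree-shift of $f_\lambda$ is $-2$).

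The main obstacle is ensuring that the boundary/Virasoro corrections that generically appear in commutator formulas such as $[e_{\theta(\lambda)}, \Fq_k(\gamma)]$ are precisely absorbed by the $B_{\delta/2}$-conjugation implicit in $\Theta_n$; it is exactly this cancellation that accounts for the multiplicative twist $k^{h/2}$ in $\psi_k$ and produces the clean diagonal form in the statement.
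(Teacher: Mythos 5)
Your proposal is correct and follows essentially the same route as the paper: reduce to the determinant-one case via \eqref{abc} together with the monodromy-equivariance of the Nakajima operators, then use Proposition~\ref{prop:Markman} to identify $\Theta_n(g)$ with $\rho(\iota(g))$ and verify the identity on the Lie algebra generators $e_\lambda, f_\lambda$ using the explicit Nakajima-operator formulas of \cite{Lehn} and \cite{OLLV}. (Your closing worry about Virasoro-type correction terms is moot here, since $\iota(g)$ fixes $\delta$ and so only the operators $e_{\theta(\lambda)}, f_{\theta(\lambda)}$ for $\lambda \in H^2(S)$ arise, whose commutators with $\Fq_k(\gamma)$ are already diagonal.)
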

\begin{proof}
Consider first the determinant $1$ case.
This equality makes sense for all elements $g \in \SO(H^{\ast}(S,\BC))$ so 
it suffices to check this for the differential.
Take $g=\exp(e_{\lambda} t)$ or $g=\exp(f_{\lambda} t)$ for some $\lambda \in H^2(S)$ and compute the derivative at $t=0$ on the left hand side.
For $\rho(e_{\lambda})$ an explicit expression in terms of Nakajima operators
was found in \cite{Lehn}, and for $\rho(f_{\lambda})$ in \cite{OLLV}.
Using these references we precisely get the right hand side. This concludes the proof in determinant $1$.
For determinant $-1$ use \eqref{abc} and that the Nakajima operators are equivariant with respect to the monodromy operators of $S$.
\end{proof}

\begin{rmk}
The twist $\psi_n(g)$ can be explained by the computations in the orbifold cohomology of $S^n$,
see Section~\ref{subsec:symmetric action on Sn}.
\end{rmk}

\section{Proof of main result} \label{sec:proof}
\subsection{Reduction}
By work of Bakker \cite{Bakker} there exists a unique monodromy orbit
for Lagrangian planes $P \subset X$ in hyperk\"ahler varieties of $K3^{[n]}$-type, such that the class of the line in $P$ is primitive in $H_2(X,\BZ)$.
Because the equality in Theorem~\ref{thm:main} in one case implies the same equality for all deformations of the pair $(X,P)$,
it suffices to prove the theorem for a single case.
We hence can specialize $X$ to $S^{[n]}$, assume that $S$ contains a $(-2)$-curve $C \cong \p^1$, and
for the Lagrangian plane $P$ take
\[ \p^n \cong C^{[n]} \subset S^{[n]}. \]
The class of a line $\ell$ in $C^{[n]}$ is
\[ \ell = \ell_n - (n-1)A \in H_2(S^{[n]},\BZ) \]
where $\ell_n$ is the class of the locus of subschemes which are incident to $C$ and $n-1$ fixed distinct points away from $C$,
and $A$ is the class of a fiber of the Hilbert-Chow morphism over a generic point in the singular locus.
Using that $\delta \cdot A = -1$ and setting $\CL := \CO_S(C)$ we hence get the dual class
\[ L = \ell^{\vee} = \theta(c_1(\CL)) - \frac{\delta}{2} = c_1(\CL_n \otimes \CO(-\delta/2)) \in H^2(S^{[n]},\BQ). \]
To prove Theorem~\ref{thm:main} it hence suffices to check the following explicit formula:
%We hence have to show the explicit formula:
\begin{equation} [ C^{[n]} ] = \left[ v( \CL_n \otimes \CO(-\delta/2) ) \right]_{n}. \label{reduction} \end{equation}

\subsection{Key computation} \label{sec:key computation}
Consider again the spherical twist along the structure sheaf:
\[ \ST_{\CO_S} : D^b(S) \to D^b(S), \quad E \mapsto \mathrm{Cone}( R \Hom(\CO_S, E) \otimes \CO_S \to E ). \]
The following basic computation is our main input:
\begin{prop} \label{prop:Key}
Let $\CL = \CO_S(C)$ for a $(-2)$-curve $C \subset S$. Then
\[ \ST_{\CO_S}^{[n]}( \CL_n \otimes \CO(-\delta) ) = \iota_{\ast}( \omega_{\p^n} ) \]
where $\iota : \p^n \cong C^{[n]} \to S^{[n]}$ is the inclusion.
\end{prop}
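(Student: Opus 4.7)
The plan is to transport the claim through the BKR equivalence $\Psi$ to an equivalent statement in $D_G(S^n)$, apply the box-product compatibility \eqref{compat} to reduce to the standard spherical twist computation on the K3 surface $S$, and transport the answer back using that BKR is compatible with the inclusion $C^{[n]} \hookrightarrow S^{[n]}$.

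\medskip

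On $S$, the spherical twist is transparent. For $\CL = \CO_S(C)$ with $C$ a $(-2)$-curve, Riemann--Roch gives $\chi(S,\CL) = 1$ and one checks $H^{\geq 1}(S,\CL) = 0$; hence $R\Hom(\CO_S, \CL) = \BC$ in degree $0$ and the defining triangle collapses to
$$\ST_{\CO_S}(\CL) \;=\; \Cone(\CO_S \hookrightarrow \CL) \;=\; \iota_{C,\ast}\CO_{\p^1}(-2) \;=\; \iota_{C,\ast}\omega_{\p^1}.$$
The first key input at the level of $S^{[n]}$ is the identification
$$\Psi^{-1}\bigl(\CL_n \otimes \CO(-\delta)\bigr) \;=\; (\CL^{\boxtimes n}, -1) \;\in\; D_G(S^n).$$
Indeed, $\Psi((\CL^{\boxtimes n}, 1)) = \CL_n$ follows from Remark~\ref{rmk:commute with tensor}, and the sign twist is handled by a Scala-type computation identifying the sign-isotypic component of $\pr_{2,\ast}(\pr_1^\ast \CL^{\boxtimes n} \otimes \CO_{Z_n})$ with $\det \CL^{[n]} = \CL_n \otimes \CO(-\delta)$.

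\medskip

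Granted this identification, \eqref{compat} together with the commutation of the $\chi$-twist with $\ST_{\CO_S}^{\boxtimes n}$ yields
$$\ST_{\CO_S}^{\boxtimes n}\bigl((\CL^{\boxtimes n}, -1)\bigr) \;=\; \bigl((\ST_{\CO_S}\CL)^{\boxtimes n}, -1\bigr) \;=\; \bigl(j_\ast \omega_{(\p^1)^n}, -1\bigr),$$
where $j \colon (\p^1)^n \hookrightarrow S^n$ is the $G$-equivariant inclusion. To close the loop I apply $\Psi$: the universal subscheme $Z_n \subset S^{[n]} \times S^n$ restricts over $C^{[n]} = \p^n$ to the universal subscheme of $\Hilb^G((\p^1)^n) = \p^n$, giving $\Psi \circ j_\ast = \iota_\ast \circ \Psi_C$ for the BKR functor $\Psi_C$ of $\p^n$. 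Since the Hilbert--Chow morphism of $\p^1$ is an isomorphism, $\Psi_C$ is just the isotypic-component functor for the quotient $q \colon (\p^1)^n \to \p^n$, and the classical identity $(q_\ast \omega_{(\p^1)^n})^{\mathrm{sign}} = \omega_{\p^n}$ for symmetric ramified covers then completes the proof.

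\medskip

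The principal obstacle is the Scala-type identification used in the first key input: while the case $\CL = \CO_S$ (which amounts to $F_\chi(\CO_{S^{[n]}}) = \CO(-\delta)$) is classical, its generalization to arbitrary $\CL$ demands a careful computation of the sign-isotypic part of $\pr_{2,\ast}(\pr_1^\ast \CL^{\boxtimes n} \otimes \CO_{Z_n})$, tracking the Ploog linearization through the Bridgeland--King--Reid pushforward. The remaining steps---compatibility of BKR with the inclusion $j$ and the sign-isotypic identity for dualizing sheaves under symmetric ramified covers---are geometrically transparent once the combinatorics are set up.
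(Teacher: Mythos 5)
Your opening moves agree with the paper: $\ST_{\CO_S}(\CL)=\CO_C(C)=\omega_{\p^1}$ on $S$, and the identification $\Psi\bigl((\CL^{\boxtimes n},-1)\bigr)=\CL_n\otimes\CO(-\delta)$ is not really an obstacle at all --- it is the case $k=n$ of Krug's theorem \eqref{Krug result}, since $(\CL^{\boxtimes n},-1)=W^n(\CL)$ and $\Psi(W^n(\CL))=\det\CL^{[n]}=\CL_n\otimes\CO(-\delta)$; no new Scala-type computation is needed. The genuine gap is the step you treat as ``geometrically transparent'': the asserted compatibility $\Psi\circ j_\ast=\iota_\ast\circ\Psi_C$ for the $G$-equivariant closed embedding $j\colon C^n\hookrightarrow S^n$. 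This is false as a general statement. By the projection formula, $\Psi(j_\ast\CF)$ is computed by the \emph{derived} restriction of $\CO_{Z_n}$ to $S^{[n]}\times C^n$, and this is not the universal $G$-cluster of $\Hilb^G(C^n)$ sitting over $C^{[n]}$: the locus of $\xi\in S^{[n]}$ whose support lies in $C$ has several irreducible components of dimension $n$ (subschemes with fat points at points of $C$ but not contained in $C$), and $Z_n\cap(S^{[n]}\times C^n)$ lives over all of them, not just over $C^{[n]}$. You can see the failure concretely on the untwisted object: your compatibility would give $\Psi\bigl((\CO_C^{\boxtimes n},1)\bigr)=\iota_\ast\CO_{C^{[n]}}$, whereas the equivariant Koszul resolution $(\CO_C^{\boxtimes n},1)\cong[W^n(\CO(-C))\to\cdots\to W^0(\CO(-C))]$ together with Krug's theorem gives $\Psi\bigl((\CO_C^{\boxtimes n},1)\bigr)\cong[\wedge^n\CO(-C)^{[n]}\to\cdots\to\CO_{S^{[n]}}]$, whose $K$-theory class has leading term $\pm c_n\bigl(\CO(-C)^{[n]}\bigr)$ rather than $c_n\bigl(\CO(C)^{[n]}\bigr)$; these differ already for $n=2$, so the two objects cannot be isomorphic.

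What makes the proposition true for the \emph{sign-twisted} object is precisely the content you skip: the identity $W^k(L)\otimes\chi=(L^{\boxtimes n},1)\otimes W^{n-k}(L^{\vee})$ (Lemma~\ref{lemma:key helper}) converts the Koszul complex built from $\wedge^\bullet\CO(-C)^{[n]}$ into one built from $\wedge^\bullet\CO(C)^{[n]}$, and only then does the geometric input that $C^{[n]}$ is the zero locus of a \emph{regular} section of $\CO(C)^{[n]}$, followed by Verdier duality, identify the result with $\iota_\ast\omega_{\p^n}$. In other words, the vanishing of the excess contributions from the components other than $C^{[n]}$ is exactly what has to be proved, and it is special to the $\chi$-twisted linearization; it cannot be imported from a functorial compatibility of BKR with $j_\ast$. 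There is also a smaller sign point worth flagging: $\bigl((\ST_{\CO_S}\CL)^{\boxtimes n},-1\bigr)=(\omega_{\p^1}^{\boxtimes n},-1)$ coincides with the \emph{canonical} linearization of $\omega_{(\p^1)^n}$ only because $\dim\p^1$ is odd, and your final appeal to $(q_\ast\omega_{(\p^1)^n})^{G}=\omega_{\p^n}$ needs that identification to be made explicit --- this is the same kind of sign subtlety the paper warns about in Remark~\ref{prop:Key}'s surrounding discussion.
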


\begin{rmk}
The proof of the proposition below is a detailed exposition of a computation done in \cite[Prop.7.2]{Beckmann1}.
(This exposition corrects a subtle sign issue that can be found in the first two arXiv versions of \cite{Beckmann1}.).
\end{rmk}

For the proof we need some preparation.
For $I \subset \{ 1, \ldots, n \}$ let $\pi_I : S^n \to S^{|I|}$ be the projection to the components indexed by $I$.
For $L \in \Pic(S)$ the pullback
\[ \pi_{1 \cdots k}^{\ast}( L^{\boxtimes k}, -1 ) \cong \pi_{1 \cdots k}^{\ast}( L^{\boxtimes k}, -1 ) \otimes \pi_{(k+1) \cdots n}^{\ast}( \CO_S^{\boxtimes n-k} , 1) \]
carries a natural $S_k \times S_{n-k}$ linearization. Following \cite[Defn.3.4]{Krug} we define
\[ W^k(L) = \mathrm{Ind}_{S_k \times S_{n-k}}^{S_n}( \pi_{1 \cdots k}^{\ast}( L^{\boxtimes k}, -1 ) ) 
=
\Big( \bigoplus_{\substack{I \subset \{ 1, \ldots, n \} \\ |I| = k }} \mathrm{pr}_I^{\ast}(L^{\boxtimes k}), \sigma \Big) \in D_{G}(S^n).
\]
where $\mathrm{Ind}_{S_k \times S_{n-k}}^{S^n}$ is the induction functor of the inclusion $S_k \times S_{n-k} \subset S_n$,
see \cite[Sec.2.2]{Krug}.
Krug's main result (\cite[Thm 1.1]{Krug}) then shows that
\begin{equation} \Psi( W^k(L)) = \wedge^k L^{[n]}. \label{Krug result} \end{equation}
%We need the following Lemma:

\begin{lemma} \label{lemma:key helper}
For any $k \in \{ 0, \ldots n \}$ we have
\[ W^k(L) \otimes \chi = (L^{\boxtimes n},1) \otimes W^{n-k}(L^{\vee}). \]
\end{lemma}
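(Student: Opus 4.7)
My plan is to apply the projection formula for equivariant induction to both sides and then recognize that the two resulting induced objects are interchanged by a block-swap conjugation in $S_n$.

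Recall that for a subgroup $H \leq G = S_n$, an $H$-equivariant sheaf $V$ on $S^n$, and a $G$-equivariant sheaf $W$ on $S^n$, there is a canonical isomorphism $\mathrm{Ind}_H^G(V) \otimes W \cong \mathrm{Ind}_H^G(V \otimes \mathrm{Res}_H^G W)$. First I apply this to the LHS with $H = S_k \times S_{n-k}$ (stabilizing $\{1,\ldots,k\}$) and $W = \chi$. Since $\mathrm{Res}_H^G \chi = \chi_{S_k} \boxtimes \chi_{S_{n-k}}$, twisting the inner representation of $W^k(L)$ by this character flips the sign of the linearization in each factor, yielding
\[ W^k(L) \otimes \chi = \mathrm{Ind}_H^G\bigl(\pi_{1\cdots k}^\ast(L^{\boxtimes k}, +1) \otimes \pi_{(k+1)\cdots n}^\ast(\CO_S^{\boxtimes (n-k)}, -1)\bigr). \]

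Next I apply the projection formula to the RHS with $H' = S_{n-k} \times S_k$ (stabilizing $\{1,\ldots,n-k\}$) and $W = (L^{\boxtimes n},1)$. The restriction $(L^{\boxtimes n},1)|_{H'}$ is $\pi_{1\cdots (n-k)}^\ast(L^{\boxtimes (n-k)},+1) \otimes \pi_{(n-k+1)\cdots n}^\ast(L^{\boxtimes k},+1)$. Tensoring against the inner representation of $W^{n-k}(L^\vee)$, the first $n-k$ line-bundle factors cancel to $(\CO_S^{\boxtimes (n-k)}, -1)$ while the last $k$ factors remain $(L^{\boxtimes k}, +1)$, giving
\[ (L^{\boxtimes n},1) \otimes W^{n-k}(L^\vee) = \mathrm{Ind}_{H'}^G\bigl(\pi_{1\cdots (n-k)}^\ast(\CO_S^{\boxtimes (n-k)}, -1) \otimes \pi_{(n-k+1)\cdots n}^\ast(L^{\boxtimes k}, +1)\bigr). \]

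Finally, the block-swap permutation $w \in S_n$ carrying $\{1,\ldots,k\}$ onto $\{n-k+1,\ldots,n\}$ conjugates $H$ to $H'$ and transports the inner representation in the first formula onto the inner representation in the second. Since $\mathrm{Ind}_H^G(V) \cong \mathrm{Ind}_{wHw^{-1}}^G(V^w)$ canonically, the two sides coincide. The main obstacle is the sign bookkeeping in applying the projection formula: the $(\pm 1)$-twists of each linearization must be tracked factor by factor through the restriction and tensor product. Because all sheaves involved are non-graded line bundles, no Koszul signs arise from tensor commutativity, so the verification is essentially combinatorial.
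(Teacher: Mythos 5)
Your proof is correct and follows essentially the same route as the paper: both arguments rest on the projection formula for equivariant induction and a factor-by-factor tracking of the $\chi$-twists, the paper simply running the computation from left to right in one chain of equalities (absorbing $\chi$, then extracting $(L^{\boxtimes n},1)$) rather than meeting in the middle. Your explicit block-swap conjugation $\mathrm{Ind}_H^G(V)\cong\mathrm{Ind}_{wHw^{-1}}^G(V^w)$ is exactly the identification the paper uses implicitly when it recognizes the induced object supported on the last $n-k$ coordinates as $W^{n-k}(L^{\vee})$.
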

\begin{proof}
%We can write
%\[ W^k(L) = \mathrm{Ind}_{S_k \times S^{n-k}}^{S_n}\Big( \pi_{1 \cdots k}^{\ast}( L^{\boxtimes k}, -1 ) \otimes \pi_{(k+1) \cdots n}^{\ast}(\CO_S^{\boxtimes (n-k)},1)  \Big). \]
We have
\begin{align*}
W^k(L) \otimes \chi & = \mathrm{Ind}_{S_k \times S_{n-k}}^{S_n}\Big( \pi_{1 \cdots k}^{\ast}( L^{\boxtimes k}, 1 ) \otimes \pi_{(k+1) \cdots n}^{\ast}(\CO_S^{\boxtimes (n-k)},-1)  \Big) \\
& = (L^{\boxtimes n},1) \otimes \mathrm{Ind}_{S_k \times S_{n-k}}^{S_n}\Big( \pi_{1 \cdots k}^{\ast}( \CO_S^{\boxtimes k}, 1 ) \otimes \pi_{(k+1) \cdots n}^{\ast}((L^{\vee})^{\boxtimes (n-k)},-1)  \Big) \\
& = (L^{\boxtimes n},1) \otimes W^{n-k}(L^{\vee}).
\end{align*}
\end{proof}

\begin{proof}[Proof of Proposition~\ref{prop:Key}]
Since we have
\[ \ST_{\CO_S}(\CO_S(C)) = \CO_C(C), \]
we get by \eqref{compat} that
\[ \ST_{\CO_S}^{\boxtimes n}( \CO(C)^{\boxtimes n},1 ) = (\CO_C(C)^{\boxtimes n},1). \]
Since tensoring with $\chi$ commutes with any $F^{\boxtimes n}$ for $F \in \Aut(D^b(S))$, therefore
\[ \ST_{\CO_S}^{\boxtimes n}( \CO(C)^{\boxtimes n},-1 ) = (\CO_C(C)^{\boxtimes n},-1). \]

We now apply $\Psi$ and compute both sides. The claim will follow.

\vspace{7pt}
\emph{Step 1: The left hand side.}  We have
\begin{align*}
\Psi \circ \ST_{\CO_S}^{\boxtimes n}( \CO(C)^{\boxtimes n},-1 )
& = \ST_{\CO_S}^{[n]} \Psi( \CL^{\boxtimes n} \otimes (\CO_S^{\boxtimes n}, -1) ) \\
& = \ST_{\CO_S}^{[n]}( \CL_n \otimes \Psi( \CO_S^{\boxtimes n}, -1) ) \\
& = \ST_{\CO_S}^{[n]}( \CL_n \otimes \Psi( W^n(\CO_S) ) ) \\
& = \ST_{\CO_S}^{[n]}( \CL_n \otimes \det( \CO_S^{[n]}) ) \\
& = \ST_{\CO_S}^{[n]}( \CL_n \otimes \CO_{S^{[n]}}(-\delta) ),
\end{align*}
where we used Remark~\ref{rmk:commute with tensor} in the second equation,
the definition of $W^k$ in the third,
Krug's result \eqref{Krug result} in the forth.

\vspace{7pt}
\emph{Step 2: The right hand side.}  Using the resolution $0 \to \CO_S(-C) \to \CO_S \to \CO_C \to 0$,
and taking the $n$-th box-product, we obtain the $S_n$-equivariant Koszul resolution
\[
\Big[ W^n(\CO_S(-C)) \to  \ldots \to W^1(\CO_S(-C)) \to W^0(\CO_S(-C)) \Big] \cong (\CO_C^{\boxtimes n}, 1).
\]
Tensoring with the non-trivial character $\chi$ and applying Lemma~\ref{lemma:key helper} we find:
\[
(\CO_S(-C)^{\boxtimes n}, 1) \otimes 
\Big[ W^0(\CO_S(C)) \to  \ldots \to W^{n-1}(\CO_S(C)) \to W^n(\CO_S(C)) \Big] \cong (\CO_C^{\boxtimes n}, -1).
\]
Hence we get
\[ \Big[ W^0(\CO_S(C)) \to  \ldots \to W^{n-1}(\CO_S(C)) \to W^n(\CO_S(C)) \Big] \cong (\CO_C(C)^{\boxtimes n}, -1). \]
Now we apply $\Psi$ and use Krug's result \eqref{Krug result}, to get
\begin{equation} \Big[ \CO_{S^{[n]}} \to \CO_S(C)^{[n]} \to  \ldots \to \wedge^{n-1}(\CO_S(C)^{[n]}) \to \det(\CO_S(C)^{[n]}) \Big] \cong \Psi(\CO_C(C)^{\boxtimes n}, -1). \label{abc222} \end{equation}
The term $\CO_{S^{[n]}}$ appears here in degree $-n$.

It is well-known that $\CO_{C^{[n]}}$ is cut out by a regular section of $\CO_S(C)^{[n]}$.
Taking the Kozsul resulution we get:
\[
\left[ \wedge^n (\CO(C)^{[n]})^{\vee} \to \wedge^{n-1} (\CO(C)^{[n]})^{\vee} \to \ldots \to (\CO(C)^{[n]})^{\vee} \to \CO_{S^{[n]}} \right] \cong \iota_{\ast} \CO_{C^{[n]}}
\]
Hence the dual $\CO_{C^{[n]}}^{\vee} := R \Hom( \iota_{\ast} \CO_{C^{[n]}}, \CO_{S^{[n]}} )$ reads
\[
\left( \iota_{\ast} \CO_{C^{[n]}} \right)^{\vee} \cong \left[ \CO_{S^{[n]}} \to \ldots \to \wedge^{n-1} \CO(C)^{[n]} \to \wedge^n \CO(C)^{[n]}  \right],
\]
where the term $\CO_{S^{[n]}}$ appears in degree $0$.

Comparing with \eqref{abc222} we conclude that:
\[
\Psi(\CO_C(C)^{\boxtimes n}, -1) = (\iota_{\ast} \CO_{C^{[n]}})^{\vee}[n].
\]
The final statement follows from Verdier duality:
\[
(\iota_{\ast} \CO_{C^{[n]}})^{\vee} \cong \iota_{\ast}( \omega_{\p^n} )[-n]. \qedhere
\]
\end{proof}

\subsection{Proof of Theorem~\ref{thm:main}}
Let $F = \ST_{\CO_S}$ and $\CL = \CO_S(C)$. By Proposition~\ref{prop:Key} we have
\begin{equation} \label{F eqn}
(F^{[n]})^H v(\CL_n \otimes \CO(-\delta) ) = v( \iota_{\ast}(\omega_{\p^n}) )
\end{equation}
where $\iota : \p^n = C^{[n]} \to S^{[n]}$ is the inclusion.
We now rewrite,
\begin{align*}
 (F^{[n]})^H = (F^H)^{[n]} & = 
B_{-\delta/2} \circ B_{\delta/2} \circ (F^H)^{[n]} \circ B_{-\delta/2} \circ B_{\delta/2} \\
& =
B_{-\delta/2} \circ \Theta_n(F^H) \circ B_{\delta/2}.
\end{align*}
Inserting into \eqref{F eqn}, we find that
\[ B_{-\delta/2} \circ \Theta_n(F^H) \circ B_{\delta/2} v(\CL_n \otimes \CO(-\delta)) = v( \iota_{\ast}(\omega_{\p^n}) ), \]
and hence
\[ \Theta_n(F^H) \circ v( \CL_n \otimes \CO(-\delta/2) ) = v( \iota_{\ast}(\omega_{\p^n} \otimes \iota^{\ast}\CO(-\delta/2) ) ). \]
We have that $F^H$ is an involution, so that:
\begin{equation} v( \CL_n \otimes \CO(-\delta/2) ) = \Theta_n(F^H) v( \iota_{\ast}(\omega_{\p^n} \otimes \iota^{\ast}\CO(-\delta/2) ) ). \label{jojo} \end{equation}

Note that
\[ v( \iota_{\ast}(\omega_{\p^n} \otimes \iota^{\ast}\CO(-\delta/2) ) ) = [ C^{[n]} ] + \ldots \]
where $(\ldots)$ stands for terms of higher codimension.
By Corollary~\ref{cor:Markman} 
we have that $\Theta_n(F^H)$ is degree-reserving, that is it sends $H^{k+2n}(S^{[n]})$ to $H^{-k+2n}(S^{[n]})$.
(Alternatively, 
this follows from Lemma~\ref{lemma:Htilde conjugate} and computing the commutator with $h$, see \cite[Lemma 3.4]{Markman2}).
Thus taking the degree $n$ component in \eqref{jojo} yields:
%we get
%\[ v( ) = \Theta_n(F^H) v( \CO(\theta(C)+\delta/2) |_{C^{[n]}} ). \]
%Taking the degree $n$ component yields
\[
\left[ v(\CL_n \otimes \CO(-\delta/2)) \right]_{\deg n} = \Theta_n(F^H) [ C^{[n]}].
\]
The following lemma hence implies equation \eqref{reduction} and hence Theorem~\ref{thm:main}.
\qed

\begin{lemma}
Let $C \subset S$ be a $(-2)$ curve and let $F = \ST_{\CO_S}$.
Then
\[ \Theta_n(F^H) [C^{[n]}] = [C^{[n]}]. \]
\end{lemma}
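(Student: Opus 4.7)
The plan is to express the class $[C^{[n]}]$ via Nakajima creation operators whose arguments lie in $H^2(S)$, and then to apply Corollary~\ref{cor:Markman}. Since the cohomological spherical twist $F^H = \ST_{\CO_S}^H$ acts as the identity on $H^2(S)$ (as already noted in the proof of Proposition~\ref{prop:Markman}), the class will automatically be invariant under $\Theta_n(F^H)$.

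The first step is the standard Nakajima presentation
\[ [C^{[n]}] \;=\; \frac{1}{n!}\,\Fq_1([C])^n \vacuum \;\in\; H^{2n}(S^{[n]}). \]
Geometrically, $\Fq_1(\gamma)$ corresponds to adjoining one point subject to the incidence condition imposed by $\gamma \in H^\ast(S)$. Iterating this $n$ times with $\gamma=[C]$ yields the closure of the locus of $n$ ordered distinct points lying on $C$; dividing by $n!$ gives the class of the reduced $n$-dimensional subvariety $C^{[n]} \subset S^{[n]}$. The cohomological degrees match since $\Fq_1$ applied to an $H^2$-class shifts degree by $2$.

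With this in hand the rest is a direct application. Applying Corollary~\ref{cor:Markman} with $g=F^H \in \DMon(S)$ yields
\[ \Theta_n(F^H)[C^{[n]}] \;=\; \frac{1}{n!}\,\Fq_1\bigl(\psi_1(F^H)[C]\bigr)^n \vacuum. \]
Now $\psi_1(g) = 1^{h/2}\circ g\circ 1^{-h/2}=g$, so $\psi_1(F^H)=F^H$. Since $F^H$ fixes $H^2(S)$ pointwise (it swaps $1 \leftrightarrow -\pt$ but is the identity on the middle cohomology), we have $F^H[C]=[C]$, and substituting back gives $\Theta_n(F^H)[C^{[n]}] = [C^{[n]}]$.

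The only point requiring genuine care is the Nakajima presentation of $[C^{[n]}]$; everything else is immediate from the preliminaries. In fact, even the exact coefficient $\tfrac{1}{n!}$ is inessential: what really matters is that $[C^{[n]}]$ lies in the subspace of $H^\ast(S^{[n]})$ spanned by Nakajima monomials of the form $\Fq_1(\lambda_1)\cdots \Fq_1(\lambda_n)\vacuum$ with $\lambda_i \in H^2(S)$. Corollary~\ref{cor:Markman}, combined with $\psi_k(F^H)|_{H^2(S)} = F^H|_{H^2(S)} = \id$ (valid for every $k$, since $h$ acts by zero on $H^2(S)$), shows this subspace is pointwise fixed by $\Theta_n(F^H)$, which concludes the proof.
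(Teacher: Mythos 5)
There is a genuine gap: the Nakajima presentation $[C^{[n]}] = \frac{1}{n!}\,\Fq_1([C])^n \vacuum$ is false for $n \ge 2$, and so is your fallback claim that $[C^{[n]}]$ lies in the span of pure $\Fq_1$-monomials. The correct statement is the Grojnowski--Nakajima exponential formula
\[ \sum_{n \ge 0} [C^{[n]}] \;=\; \exp\Big( \sum_{m \ge 1} \tfrac{(-1)^{m-1}}{m} \Fq_m([C]) \Big) \vacuum, \]
so that already for $n=2$ one has $[C^{[2]}] = \tfrac12 \Fq_1([C])^2\vacuum - \tfrac12 \Fq_2([C])\vacuum$, and the $\Fq_2$-component is genuinely nonzero: it pairs nontrivially with $\Fq_2(\beta)\vacuum$ whenever $([C],\beta)\neq 0$, whereas $\Fq_1([C])^2\vacuum$ is orthogonal to all such classes. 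Geometrically the issue is excess intersection: $\frac{1}{n!}\Fq_1([C])^n\vacuum$ is the limit of the classes of loci ``one point on each of $n$ curves in general position,'' and when the $n$ curves all degenerate to $C$ this limit acquires corrections along the diagonals; it is not the fundamental class of $C^{[n]}$. (Concretely, $C^{[n]}$ meets the exceptional divisor of the Hilbert--Chow morphism, and those contributions are carried by the $\Fq_m$, $m\ge 2$, terms.)

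The good news is that your mechanism is exactly the right one and the gap is repaired by substituting the correct input: by the exponential formula, $[C^{[n]}]$ is a linear combination of monomials $\Fq_{k_1}([C])\cdots\Fq_{k_\ell}([C])\vacuum$ whose arguments all equal $[C]\in H^2(S)$. Your observation that $\psi_k(F^H)|_{H^2(S)} = F^H|_{H^2(S)} = \id$ for every $k$ (since $h$ vanishes on $H^2(S)$ and the spherical twist acts trivially there) then applies verbatim to each such monomial, and Corollary~\ref{cor:Markman} gives $\Theta_n(F^H)[C^{[n]}]=[C^{[n]}]$. This is precisely the paper's argument; only your formula for $[C^{[n]}]$ needs to be replaced.
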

\begin{proof}
By a result of Grojnowski \cite{Groj} and Nakajima \cite{Groj} we have
\[ \sum_{n = 0}^{\infty} [C^{[n]}] = \exp\left( \sum_{m \geq 1} \frac{(-1)^{m-1}}{m} \Fq_{m}([C]) \right) \vacuum. \]
Hence $[C^{[n]}]$ is a linear combination of terms $\Fq_{k_1}([C]) \cdots \Fq_{k_{\ell}}([C]) \vacuum$.
Since $F^H$ acts by the identity of $H^2$, the claim follows now from Corollary~\ref{cor:Markman}.
\end{proof}

\section{Open questions}\label{sec:open questions}
\subsection{Multiple cover formula}
The independence of the formula \eqref{90sifsdf} from the divisibility of the class $\ell \in H_2(X,\BZ)$
suggests a connection to the multiple cover formula in Gromov-Witten theory of $K3^{[n]}$-type hyperk\"ahler varieties
conjectured in \cite{ObMC} and proven in \cite{QuasiK3} in many cases.

Assuming some familiarity with Gromov-Witten theory from \cite{ObMC} let $\Mbar_{0,2}(X,\beta)$
be the moduli space of $2$-marked genus $0$ stable maps $f: C \to X$ with $f_{\ast}[C] = \beta$.
The moduli space carries a $2n$-dimensional reduced virtual fundamental class $[\Mbar_{0,2}(X,\beta)]^{\vir}$.
Its pushforward along the product of the evaluation maps $\ev_i : \Mbar_{0,2}(X,\beta)$, $i=1,2$ defines the Lagrangian cycle
\[ Z_{\beta} := (\ev_1 \times \ev_2)_{\ast} [\Mbar_{0,2}(X,\beta)]^{\vir} \in A_{2n}(X \times X). \]
If the moduli space $\Mbar_{0,2}(X,\beta)$ is of expected dimension $2n$,
then the virtual class is just the ordinary fundamental class.
For example, Theorem~\ref{thm:main} shows that for $\beta = \ell$ primitive, we have in cohomology
\[
Z_{\ell} = \mathrm{pr}_{1}^{\ast}\left( [ v(\CO(L)) ]_n \right) \cup \mathrm{pr}_{2}^{\ast}\left( [ v(\CO(L)) ]_n \right),
\]
which shows that $Z_{\ell}$ encodes information about the class $[P]$.
A basic idea is to explore whether information about $Z_{\ell}$ for $\ell$ imprimitive
can be used to prove Theorem~\ref{thm:main} in general.
In particular, we have the following formula.

\begin{thm}[\cite{QuasiK3}] \label{thm:MC} Let $\beta \in H_2(X,\BZ)$ be an effective curve class.
For any $k|\beta$ let $X_k$ be a hyperk\"ahler variety of $K3^{[n]}$-type
and let $\varphi_k : H^2(X,\BR) \to H^2(X_k,\BR)$ be a real isometry such that
\begin{itemize}
\item $\varphi_k(\beta/k)$ is a primitive effective curve class,
\item $\pm [ \beta/k ] = \pm [ \varphi_k(\beta/k) ]$ in $H_2(X,\BZ)/H^2(X,\BZ) \cong H_2(X_k,\BZ)/H^2(X_k,\BZ) \cong \BZ/(2n-2)\BZ$.
\end{itemize}
Extend $\varphi_k$ as a parallel transport lift $H^{\ast}(X,\BR) \to H^{\ast}(X_k, \BR)$, see \cite{ObMC}.
Then
\[ Z_{\beta} = \sum_{k|\beta} \frac{1}{k} (\varphi_k \boxtimes \varphi_k)^{-1}( Z_{\varphi_k(\beta/k)} ). \]
\end{thm}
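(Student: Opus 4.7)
The plan is to prove the multiple cover formula by combining deformation invariance of reduced Gromov-Witten theory with a reduction to the Hilbert scheme $S^{[n]}$ of a K3 surface, where curve counts on $S^{[n]}$ can be translated into data on $S$ itself, and where the K3 multiple cover formula of Pandharipande-Thomas (building on Klemm-Maulik-Pandharipande-Thomas) is already available.

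First, I would reduce the statement to the case $X = S^{[n]}$. The cycle $Z_\beta \in A_{2n}(X \times X)$ is invariant under parallel transport along smooth families of $K3^{[n]}$-type varieties preserving $\beta$, so it depends only on the monodromy orbit of the pair $(X,\beta)$. By Markman's classification, such an orbit is determined by the BBF-square $(\beta,\beta)$, the divisibility of $\beta$, and the class of $\beta$ in the discriminant group $H_2(X,\BZ)/H^2(X,\BZ) \cong \BZ/(2n-2)\BZ$ up to sign. The two hypotheses on $\varphi_k$ are precisely the invariants of these monodromy orbits, so the real isometry $\varphi_k$ can be realized as the $H^2$-restriction of a genuine parallel transport operator; this is exactly what gives meaning to the lift $\varphi_k \boxtimes \varphi_k$ on $H^\ast(X \times X, \BR)$ and makes each term on the right-hand side an algebraic-geometric quantity rather than only a formal one.

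Second, on the model $X = S^{[n]}$ I would use a quasimap-type compactification: a quasimap from $\p^1$ to $S^{[n]}$ corresponds, through the universal family, to a family of length-$n$ subschemes of $S$ over $\p^1$, which rewrites the reduced virtual count of rational curves in $S^{[n]}$ as a relative genus-zero curve count on $S$ with length-$n$ decoration. Third, I would apply the K3 multiple cover formula on the $S$-side, which decomposes the reduced primitive counts into divisor-indexed contributions weighted by $1/k$, and transport the decomposition back through the quasimap correspondence. The terms indexed by $k \mid \beta$ on the $X$-side correspond exactly to maps whose associated $S$-curve is divisible by $k$; after reparameterizing the primitive part as a class on a possibly different Hilbert scheme $X_k$ via $\varphi_k$, one recovers the claimed formula.

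The main obstacle is matching virtual classes across the quasimap correspondence when $\beta$ is divisible. The quasimap moduli space then acquires excess strata corresponding to maps factoring through $k$-fold covers of the domain, and isolating the primitive contribution on the deformed Hilbert scheme $X_k$ requires either a virtual localization argument or a Behrend-Fantechi bivariant analysis; the automorphisms of these $k$-fold covers are the geometric source of the $1/k$ coefficient. A further subtle point is the sign ambiguity in the discriminant condition on $\varphi_k$: the element $-1$ does lie in the image of $\DMon(X)$ on $H_2$, so matching the sign across the correspondence boils down to compatibility of the reduced virtual class with this involution and with the natural $\BZ_2$-symmetry swapping the two marked points, both of which must be checked to preserve $Z_\beta$.
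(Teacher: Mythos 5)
This statement is not proved in the paper at all: Theorem~\ref{thm:MC} is quoted verbatim from \cite{QuasiK3} as an external input (and the surrounding text is careful to say it is established there only ``in many cases''), so there is no internal argument to compare yours against. Judged on its own terms, your sketch does point in the same general direction as the actual proof in \cite{QuasiK3}: deformation invariance of the reduced class plus Markman's monodromy classification (square, divisibility, residue in the discriminant group up to sign) to reduce to a model case on $S^{[n]}$, a correspondence translating genus-zero curve counts on $S^{[n]}$ into curve/sheaf counting on the K3 side, and a multiple cover statement on that side. The first step, in particular, is essentially right and is exactly how the hypotheses on $\varphi_k$ should be read.

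The genuine gap is in your third step. You treat ``the K3 multiple cover formula of Pandharipande--Thomas'' as an available black box, but what Pandharipande--Thomas prove is the Katz--Klemm--Vafa formula, i.e.\ the multiple cover structure only for the $\lambda_g$-type integrals with no insertions. The cycle $Z_\beta$ involves two evaluation insertions, and after passing through the Hilbert-scheme/$S\times\p^1$ correspondence one needs multiple cover statements for reduced invariants with nontrivial (descendent) insertions in imprimitive classes; establishing these, via wall-crossing and Quot-scheme computations, is precisely the main content of \cite{QuasiK3}, not a prerequisite for it. Assuming it makes your argument circular. Relatedly, your second step (``a quasimap to $S^{[n]}$ is a family of length-$n$ subschemes over $\p^1$, which rewrites the reduced virtual count'') is stated as if it were a formal rewriting, but the matching of reduced virtual classes across the GW/Hilb versus PT/$(S\times\p^1)$ correspondence is itself a substantive theorem with its own restrictions, and it is one of the reasons the result is only known in many, not all, cases. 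Finally, the heuristic that the $1/k$ arises from automorphisms of $k$-fold covers in a localization analysis does not reflect how the coefficient actually enters: it is built into the shape of the conjectural BPS-type expansion on the K3 side and is transported through the correspondence, rather than extracted from excess strata of a quasimap space.
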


Theorem~\ref{thm:MC} shows that for $\ell$ with arbitrary divisibility we have
\[
Z_{\ell} = \mathrm{pr}_{1}^{\ast}\left( [ v(\CO(L)) ]_n \right) \cup \mathrm{pr}_{2}^{\ast}\left( [ v(\CO(L)) ]_n \right) + (... )
\]
where the first term corresponds to the $k=1$ term in the Theorem, and (...) stands for the $k>1$ which should be
the multiple cover contributions coming from the rational curves in class $\beta/k$.
A pathway to Theorem~\ref{thm:main} hence lies in understanding the rational curves in class $\beta/k$ better.

\subsection{Arbitrary hyperk\"ahler varieties}
Let $X$ be any hyperk\"ahler variety containing a Lagrangian plane $P \subset X$.
Let $\overline{ ( - ) } : H^{\ast}(X,\BQ) \to SH^{\ast}(X,\BQ)$ be the orthogonal projection to the
subspace generated by divisor classes.
With the same notation of the introduction, Song proved in \cite[Thm. 2.2.4]{SongPhD} that
\[ \overline{ [ P ]} = 
\overline{ \left[ \frac{\mu^n}{c_X} \exp(L/\mu) \sqrt{\td_X} \right]_n } \]
where $c_X = \frac{n! 2^n}{(2n)!} C(1)$ and $\mu = -(\ell,\ell)/2r_X$.
Moreover, $\mu=1$ conjecturally by \cite{SongPhD}.

However, in general we can not expect a formula for $[P] \in H^{\ast}(X,\BQ)$ only in terms of the class $L$ dual to the class of the line $\ell$.
For example, as discussed in \cite[Example 2.2.8]{SongPhD} based on \cite{HT3} for generalized Kummer $4$-folds we have
$[P] = 1/6 L^2 + 1/72 c_2(X) + z$ where the class $z$ is non-zero and not monodromy invariant.
The issue seems to arise since there can be several Lagrangian planes $P_1, \ldots, P_N \subset X$
which have the same class of line $\ell$.
A speculation is hence whether the average over $[P_i]$ satisfies Song's formula.

\begin{question} Let $X$ be any hyperk\"ahler variety. For $\ell \in H_2(X,\BZ)$, let $P_1, \ldots, P_N \subset X$ be the Lagrangian planes
such that a line on them has class $\ell$. Assume $\mu=1$. Does
\[ \frac{1}{N} \sum_i [P_i] = \left[ \frac{1}{c_X} \exp(L) \sqrt{\td_X} \right]_n \]
hold?
\end{question}

\subsection{Chow ring}
As already asked by Song \cite{SongPhD}, we can ask for Conjecture~\ref{conj:Song} also
as an equality of Chow rings $A^{\ast}(X)$.
To use the methods of this paper for the case $X=S^{[n]}$ and $P = C^{[n]}$ for a $(-2)$-curve $C \subset S$,
we would need to extend the result of Talman \cite{Talman} to the Chow ring.
More precisely, Talman shows that the induced action on cohomology of any auto-equivalence of a hyperk\"ahler variety
intertwines the action of the LLV algebra. To lift these results to Chow hence requires two statements:
\begin{question} Let $X$ be a hyperk\"ahler variety.
\begin{enumerate}
\item[(i)] Does the LLV algebra action on cohomology lift naturally to an action on the Chow ring $A^{\ast}(X)$?
\item[(ii)] Assuming (i), let $F : D^b(X) \to D^b(X)$ be any auto-equivalence. Does the induced action $F_{\ast} : A^{\ast}(X) \to A^{\ast}(X)$
intertwine the action of the LLV algebra?
\end{enumerate}
\end{question}
By natural action we mean one where the lift of the Lefschetz grading operator $h$ should
give the expected Beauville-Voisin decomposition \cite{Beauville, Voisin} of the Chow ring $A^{\ast}(X)$.
The first question was expected since the work \cite{OLLV}, where an LLV algebra action on Chow was constructed for the Hilbert scheme $S^{[n]}$.
This action turns out to have the expected properties \cite{NOY}.
Hence for $S^{[n]}$ the second question can be stated unconditionally.
For partial evidence for (i) in the case of the Fano variety of lines of a cubic fourfold, see also \cite{Kretschmer}.

\appendix
\section{Orbifold Mukai vectors} \label{sec:orbifold mukai}
Let $G$ be a finite group acting on a smooth projective variety $X$,
and let $\Coh_G(X)$ be the category of $G$-equivariant  coherent sheaves on $X$.
In this appendix we define a Mukai vector
\[ v : K(\Coh_G(X)) \to H_G^{\ast}(X) \]
taking values in the orbifold cohomology of $X$.
We show that it is well-behaved with respect to Fourier-Mukai transforms in the usual sense.
We discuss the case of the Bridgeland-King-Reid isomorphism specifically.
Our discussion can be viewed as a reformulation of work of Baum-Fulton-Quartz on the equivariant Riemann-Roch theorem \cite{BFQ}.
A similar discussion can also be found in \cite{Popa}, and no originality is claimed here.
We also refer to \cite{FG, CR, JKK} for further discussions on orbifold cohomology.\footnote{However, contrary to these references we will always work here with the much simpler classical (non-stringy) product.}

\subsection{Orbifold cohomology}
The orbifold cohomology of the pair $(X,G)$ is defined by
\[ H_G^{\ast}(X) = \left( \oplus_{g \in G} H^{\ast}(X^g) \right)^{G} \]
where an element $h \in G$ acts on an element $\alpha \in H^{\ast}(X^g)$ 
by $h_{\ast}(\alpha)$ and $h : X^g \to X^{hgh^{-1}}$ is the isomorphism defined by the group action.
%$(\alpha_g)_{g \in G} \in \oplus_g H^{\ast}(X^g)$
%by $h( (\alpha_g )) = ( h_{\ast} \alpha
The multiplication on $H_G^{\ast}(X)$ is defined factor-wise:
\[ (\alpha_g )_{g \in G} \cdot ( \beta_g )_{g \in G} = ( \alpha \cdot \beta )_{g \in G}. \]

\begin{rmk}
A second (so called "stringy") product was defined in \cite{JKK}.
It corresponds to the cup product on the Hilbert scheme,
that is after the Bridgeland-King-Reid isomorphism.
\end{rmk}

\subsection{Trivial action}
For the trivial action of a finite group $G$ on a variety $X$ there is an isomorphism
\[ \Coh_G(X) \cong \bigoplus_{i} \Coh(X) \otimes V_i \]
where $V_i$ are the irreducible representations of $G$.
Hence we have an isomorphism:
\[ K(\Coh_G(X)) \cong K(\Coh(X)) \otimes R[G] \]
where $R[G]$ is the representation ring. For any given element $g \in G$ we have the trace morphism
$\tr_g : R[G] \to \BC$.
We obtain the trace map:
\[ \Tr_g : K(\Coh_{G}(X)) \cong K(\Coh(X)) \otimes R[G] \xrightarrow{\id \times \tr_g} K(\Coh(X)) \]
Since the trace only depend on the action of $g$ on $V_i$,
this definition is independent of the group containing $G$, e.g. we can take $G = \langle g \rangle$.
A direct check shows that $\Tr_g$ is a ring homomorphism:
$\Tr_g( A \times B ) = \Tr_g(A) \otimes \Tr_g(B)$.

\begin{example}
For a cyclic group $G = \BZ / n \BZ$ generated by $g$
we have one irreducible representation $V_{\chi}$ for each root of unity $\chi = e^{2 
pi i k/n}$, where $g$ acts by $\chi$.
Hence, if $W$ is a $G$-vector bundle which decomposes into eigenspaces $W_{\chi}$ under $g$, then
$\Tr_g(W) = \sum_{\chi} \chi W_{\chi}$.
\end{example}

\begin{comment} 
In particular, for $G$ a cyclic group generated by an element $g$,
we get an isomorphism
\[ K(\Coh_{G}(X)) \cong K(\Coh(X)) \otimes_{\BC} \BC[\chi] \]
where $\BC[\chi]$ is the character ring of $G$. The isomorphism is given by
\[ W \mapsto \sum_{\chi} W_{\chi} \otimes \chi \]
where $W_{\chi}$ is the (fiberwise) eigenspace to the character $\chi$.
We have a canonical map:
\[ \mathrm{tr}_g : \BC[\chi] \to \BC, \quad \chi \mapsto \chi(g). \]
We will require the composition
\[ \Tr_g : K(\Coh_{G}(X)) \cong K(\Coh(X)) \otimes_{\BC} \BC[\chi] \xrightarrow{\id \times \tr_g} K(\Coh(X)) \]
\end{comment}

\subsection{Chern character}
The (classical) orbifold Chern character
\[ \ch^{G} : K(\Coh_G(X)) \to H_G^{\ast}(X) \label{equivariant Chern character} \]
is the ring homomorphism defined by:
\[ \ch^G(W) = \big( \ch_g(W) )_{g \in G} := \Big( \ch(\Tr_{g}( W|_{X^g} )) \Big)_{g \in G}. \]

Define also the Baum-Fulton-Quartz Riemann-Roch morphism
\[ \tau^{G} : K(\Coh_G(X)) \to H^G(X) \]
by assigning to each $G$-equivariant vector bundle $W$:
\[ \tau^G(W) = \Big( \tau_{g}(W) \Big) := \left( \frac{ \ch( \Tr_g(W|_{X^g}) ) }{ \ch( \Tr_g( \sum_i (-1)^i \Lambda^i N^{\ast}_{X^g/X}) )} \td(X^g) \right)_{g \in G} \]
This is the orbifold analogue of the Riemann-Rich morphism $\tau = \ch(-) \td_X$,

We have the compatibility:
\[ \tau^G( V \otimes W) = \ch^G(V) \cdot \tau^G(W). \]

\subsection{Pullback and pushforward}
Let $f : X \to Y$ be a $G$-equivariant morphism, and consider the induced morphisms:
$f_{X^g} : X^g \to Y^g$.
Define the pullback factorwise by
\[ f^{\ast} : H_G^{\ast}(Y) \to H_G^{\ast}(X),  \quad f^{\ast}(\beta_g) = ( f_{X^g}^{\ast} \beta_g )_{g \in G}. \]
By a straightforward check one has:
\begin{lemma} $\ch^G(f^{\ast} W) = f^{\ast} \ch^G(W)$.
\end{lemma}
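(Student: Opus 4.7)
The plan is to unpack the definition factor by factor and reduce to the naturality of the ordinary Chern character. Fix $g \in G$; it suffices to check that the $g$-th components of $\ch^G(f^{\ast} W)$ and $f^{\ast} \ch^G(W)$ agree in $H^{\ast}(X^g)$, i.e., that
\[ \ch\bigl(\Tr_g( (f^{\ast}W)|_{X^g} )\bigr) = f_{X^g}^{\ast}\, \ch\bigl(\Tr_g(W|_{Y^g})\bigr). \]
Once this holds for every $g$, taking $G$-invariants gives the lemma.

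First, I would verify the compatibility of restriction and pullback: since $f$ is $G$-equivariant, $f$ restricts to the morphism $f_{X^g}:X^g\to Y^g$, and the square
\[
\begin{tikzcd}
X^g \arrow[r, hook] \arrow[d, "f_{X^g}"'] & X \arrow[d, "f"] \\
Y^g \arrow[r, hook] & Y
\end{tikzcd}
\]
commutes. Pulling back $W$ around the two ways of the square yields a canonical isomorphism $(f^{\ast}W)|_{X^g} \cong f_{X^g}^{\ast}(W|_{Y^g})$ of $\langle g\rangle$-equivariant sheaves on $X^g$ (with $g$ acting trivially on the base, as $X^g$ is the fixed locus).

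Next I would check that the trace map $\Tr_g$ commutes with pullback. Because $\langle g\rangle$ acts trivially on $Y^g$, the restriction $W|_{Y^g}$ decomposes in $K$-theory as $\sum_\chi V_\chi \otimes \chi$ where $V_\chi \in K(\Coh(Y^g))$ is the $\chi$-isotypic component and $\chi$ runs over characters of $\langle g\rangle$. Pullback by $f_{X^g}$ is fiberwise $\BC$-linear and preserves the $\langle g\rangle$-action, so it preserves the isotypic decomposition; hence
\[ f_{X^g}^{\ast}(W|_{Y^g}) = \sum_{\chi} f_{X^g}^{\ast}(V_\chi) \otimes \chi \]
in $K(\Coh_{\langle g\rangle}(X^g))$. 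Applying the trace character by character gives $\Tr_g\bigl(f_{X^g}^{\ast}(W|_{Y^g})\bigr) = f_{X^g}^{\ast}\,\Tr_g(W|_{Y^g})$ in $K(\Coh(X^g))$.

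Combining the last two displays with the classical naturality $\ch\circ f_{X^g}^{\ast} = f_{X^g}^{\ast}\circ \ch$ yields the required factorwise identity, completing the proof. No step presents a genuine obstacle here; the only point deserving care is the second one, namely that $\Tr_g$ is intrinsic enough to commute with any pullback, which follows because it is defined through the isotypic decomposition, a construction preserved by all $\BC$-linear exact functors such as pullback of equivariant sheaves.
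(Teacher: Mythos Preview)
Your argument is correct and is precisely the ``straightforward check'' the paper alludes to without spelling out: you verify the identity factorwise using the commutative square $X^g \to Y^g$, the compatibility of the isotypic decomposition (hence $\Tr_g$) with pullback, and the naturality of the ordinary Chern character. The paper gives no further details beyond calling it straightforward, so your write-up is simply an explicit version of the intended proof.
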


Similarly, define the pushfoward factorwise:
%We then have induced maps $f : X^g \to Y^g$, and we define
\[ f_{\ast} : H_G^{\ast}(X) \to H_G^{\ast}(Y), \quad f_{\ast}(\alpha_g) = ( f_{X^g, \ast} \alpha_g )_{g \in G}. \]

\begin{prop}[\cite{BFQ}]
For any proper $G$-equivariant morphism $f : X \to Y$ and $G$-equivariant sheaf $W$ we have
\[ \tau^G(R f_{\ast} W) = f_{\ast} \tau^G(W). \]
\end{prop}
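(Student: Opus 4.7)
The plan is to reduce the equivariant Riemann-Roch identity $\tau^G(Rf_*W) = f_* \tau^G(W)$ to ordinary Grothendieck-Riemann-Roch applied component-wise on fixed loci, with the ``moving'' (non-$g$-fixed) normal directions accounted for by the denominators $\ch(\Tr_g \Lambda_{-1} N^*_{X^g/X})$ built into the definition of $\tau^G$.

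First I would observe that both sides are indexed component-wise by $g \in G$ via the inclusion $H^*_G(X) \subset \bigoplus_{g\in G} H^*(X^g)$, and the $G$-equivariance of $f$ ensures $f(X^g) \subseteq Y^g$, so the pushforward acts on each summand via the restriction $f^g : X^g \to Y^g$. It therefore suffices to fix $g$ and prove
\[ \tau_g(Rf_*W) = (f^g)_* \tau_g(W) \quad \text{in } H^*(Y^g). \]
Since $\Tr_g$ depends only on the cyclic subgroup generated by $g$, I may further assume $G = \langle g\rangle$.

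Next I would factor $f$ as $f = p \circ i$, where $i : X \hookrightarrow Y \times \p^N$ is a $G$-equivariant closed immersion and $p : Y \times \p^N \to Y$ is the smooth proper projection; such a factorization exists by choosing a sufficiently ample $G$-equivariant line bundle on $X$ (available by an averaging argument in the projective setting). Since both sides of the desired identity are compatible with composition, it suffices to verify the result separately in two cases: (a) smooth proper $G$-equivariant morphisms, and (b) $G$-equivariant closed immersions. For case (a), proper base change gives $(Rp_*W)|_{Y^g} = R(p|_{p^{-1}(Y^g)})_*(W|_{p^{-1}(Y^g)})$, and applying the relative Atiyah-Bott holomorphic Lefschetz fixed-point formula in families together with ordinary Grothendieck-Riemann-Roch applied to the smooth proper morphism $p^g : Z^g \to Y^g$ (where $Z = Y\times\p^N$) gives the desired equality, once one uses the relative tangent sequence to match Todd classes.

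For case (b), which is the main obstacle, I would work with the $G$-equivariant Koszul resolution $\Lambda^{\bullet} N^*_{X/Y} \to i_*\CO_X$, tensor with $W$, and apply $\ch \circ \Tr_g$ on $Y^g$. The computation then reduces to an algebraic identity comparing the normal-bundle denominators arising from the two nested embeddings $X^g \hookrightarrow X \hookrightarrow Y$ and $X^g \hookrightarrow Y^g \hookrightarrow Y$: one must decompose $N_{X/Y}|_{X^g}$ and $N_{Y^g/Y}|_{X^g}$ into their $g$-eigenspaces and match factors $\Tr_g \Lambda_{-1}$ of the moving parts, together with the classical self-intersection formula $i^*i_*(-) = (-)\cdot c_{\mathrm{top}}(N_{X/Y})$ to handle the fixed parts. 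The careful bookkeeping of this eigenspace decomposition and the verification that the resulting Chern character identity holds on the nose is the delicate heart of the Baum-Fulton-Quartz argument, and is where one must be most careful with signs and multiplicities.
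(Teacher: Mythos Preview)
The paper does not give a proof of this proposition: it is stated with attribution to Baum--Fulton--Quart \cite{BFQ} and used as a black box in the appendix. So there is no ``paper's own proof'' to compare against; what you have written is a sketch of the argument from the cited reference itself.

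Your outline is essentially the standard strategy of \cite{BFQ}: reduce to a single $g$ (hence a cyclic group), factor $f$ as a closed immersion followed by a smooth projection, and treat the two cases separately via a Lefschetz-type localization on fixed loci combined with ordinary GRR. One point to tighten: in your case (a) you first pass to $p^{-1}(Y^g) = Y^g \times \p^N$ by proper base change, but the target of the localization is the smaller $Z^g = Y^g \times (\p^N)^g$; the passage from the former to the latter is precisely where the denominator $\ch(\Tr_g \Lambda_{-1} N^*)$ enters, and your sentence invoking the ``relative Atiyah--Bott holomorphic Lefschetz fixed-point formula in families'' is doing real work that deserves to be unpacked rather than named. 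Apart from that, the sketch is sound and matches the cited source.
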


\subsection{Mukai vector}
Given an $G$-equivariant vector bundle $W$ we define the Mukai vector by
\[ v(W) := \Big( v_g(W) \Big)_{g \in G} := \left( 
\ch( \Tr_g(W|_{X^g}) ) \cdot \sqrt{ \frac{\td(X^g)}{ \ch( \Tr_g( \sum_i (-1)^i \Lambda^i N^{\ast}_{X^g/X}) )} } \right)_{g \in G}. \]

Consider $G$-actions on varieties $X,Y$ and endow $X \times Y$ with the diagonal action.
In particular, $(X \times Y)^g = X^g \times Y^g$ and
\[ N_{(X \times Y)^g / (X \times Y) } \cong \mathrm{pr}_1^{\ast}( N_{X^g/X}) \oplus 
\mathrm{pr}_2^{\ast}( N_{Y^g/Y} ). \]
Given an object $\CE \in D^b_G(X \times Y)$ consider the associated Fourier-Mukai transform:
\[ \FM_{\CE} : D^b_G(X) \to D^b_{G}(Y), \quad A \mapsto \pr_{2 \ast}( \pr_1^{\ast}(A) \otimes \CE). \]

Given a class $\alpha \in H_G^{\ast}(X \times Y)$ let it act as a correspondence by:
\[ \FM_{\alpha} : H_G^{\ast}(X) \to H_G^{\ast}(Y), \quad \beta \mapsto \pr_{2 \ast}( \pr_1^{\ast}(\beta) \cdot \alpha ). \]

\begin{prop} We have a commutative diagram:
\[
\begin{tikzcd}
D^b_G(X) \ar{d}{v(-)} \ar{r}{\FM_{\CE}} & D^b_G(Y) \ar{d}{v ( - )} \\
H_G^{\ast}(X) \ar{r}{\FM_{v(\CE)}} & H_G^{\ast}(Y).
\end{tikzcd}
\]
\end{prop}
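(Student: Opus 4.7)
The plan is to reduce the commutativity to the equivariant Grothendieck--Riemann--Roch theorem of Baum--Fulton--Quartz already stated in the appendix, mirroring the classical proof that the Mukai vector intertwines Fourier--Mukai transforms. The key device is an \emph{orbifold Todd class} $\td^G_X \in H^{\ast}_G(X)$ with $g$-component
\[ \td^G_g(X) = \frac{\td(X^g)}{\ch(\Tr_g(\sum_i (-1)^i \Lambda^i N^{\ast}_{X^g/X}))}, \]
in terms of which one has tautologically $\tau^G(W) = \ch^G(W) \cdot \td^G_X$ and $v(W) = \ch^G(W) \cdot \sqrt{\td^G_X}$.

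The crucial preliminary step is the external product formula
\[ \td^G_{X \times Y} = \pr_1^{\ast}(\td^G_X) \cdot \pr_2^{\ast}(\td^G_Y) \]
for the diagonal $G$-action on $X \times Y$. This is a componentwise check: $(X \times Y)^g = X^g \times Y^g$, the conormal bundle splits as a $\langle g \rangle$-equivariant bundle $N^{\ast}_{(X \times Y)^g/(X \times Y)} \cong \pr_1^{\ast} N^{\ast}_{X^g/X} \oplus \pr_2^{\ast} N^{\ast}_{Y^g/Y}$, the ordinary Todd class is multiplicative under products, and both the Koszul-type operation $\sum_i (-1)^i \Lambda^i$ and $\ch \circ \Tr_g$ convert direct sums of $\langle g \rangle$-equivariant bundles into (tensor) products.

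Given these inputs, the diagram is traced by a formal calculation. Using BFQ, the multiplicativity $\tau^G(V \otimes W) = \ch^G(V) \tau^G(W)$, functoriality of $\ch^G$ under pullback, the external product formula, and the projection formula (which holds componentwise on each $X^g \times Y^g$ by the usual projection formula), one computes
\[ v(\FM_{\CE}(A)) \cdot \sqrt{\td^G_Y} = \tau^G(\FM_{\CE}(A)) = \pr_{2 \ast}\!\left( \pr_1^{\ast}( \ch^G(A) \cdot \td^G_X) \cdot \ch^G(\CE) \right) \cdot \td^G_Y. \]
On the other side, expanding $v(A)$ and $v(\CE)$, splitting $\sqrt{\td^G_{X \times Y}}$ via the external product formula, and applying the projection formula gives
\[ \FM_{v(\CE)}(v(A)) = \pr_{2 \ast}\!\left( \pr_1^{\ast}( \ch^G(A) \cdot \td^G_X) \cdot \ch^G(\CE) \right) \cdot \sqrt{\td^G_Y}. \]
Dividing the first identity by $\sqrt{\td^G_Y}$, which is a unit in $H^{\ast}_G(Y)$ since its degree-zero part is $1$, and comparing with the second yields $v(\FM_{\CE}(A)) = \FM_{v(\CE)}(v(A))$.

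The main obstacle is the external product formula for $\td^G$; the other ingredients are either stated in the appendix (BFQ, multiplicativity of $\tau^G$) or are componentwise consequences of standard facts. A minor technical point to verify is that pullback and pushforward, defined componentwise, respect $G$-invariants and satisfy the projection formula; both follow from the $G$-equivariance of the identifications $X^g \cong X^{hgh^{-1}}$ induced by the group action.
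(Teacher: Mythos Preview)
Your proof is correct and follows essentially the same route as the paper: both arguments hinge on the componentwise splitting $\td^G_{X\times Y}=\pr_1^{\ast}\td^G_X\cdot\pr_2^{\ast}\td^G_Y$ (which the paper states as the identity $\tau_g(\CE)=v_g(\CE)\cdot\sqrt{\td^G_g(X)}\cdot\sqrt{\td^G_g(Y)}$), then apply BFQ for the pushforward, the multiplicativity $\tau^G(V\otimes W)=\ch^G(V)\,\tau^G(W)$, and the projection formula to pull the $Y$-factor out. Your write-up is simply more explicit in packaging the Todd-like factor as a class $\td^G$ and in isolating the external product formula as the key step.
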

\begin{proof}
Note that we have
\[ \tau_g(\CE) = v(\CE) \cdot
\sqrt{ \frac{\td(X^g)}{ \ch( \Tr_g( \sum_i (-1)^i \Lambda^i N^{\ast}_{X^g/X}) )}}
\cdot 
\sqrt{ \frac{\td(Y^g)}{ \ch( \Tr_g( \sum_i (-1)^i \Lambda^i N^{\ast}_{Y^g/Y}) )} }.
\]
Hence we find
\begin{align*}
v_g( \pr_{2 \ast}( \pr_1^{\ast}(A) \otimes \CE) )
& =
 \sqrt{ \frac{\td(Y^g)}{ \ch( \Tr_g( \sum_i (-1)^i \Lambda^i N^{\ast}_{Y^g/Y}) )} }^{-1} 
 \tau_{g}( \pr_{2 \ast}( \pr_1^{\ast}(A) \otimes \CE) ) \\
& =
\sqrt{ \frac{\td(Y^g)}{ \ch( \Tr_g( \sum_i (-1)^i \Lambda^i N^{\ast}_{Y^g/Y}) )} }^{-1} 
\pr_{2 \ast}( \tau_{g}(\pr_1^{\ast}(A) \otimes \CE) ) \\
& =
\sqrt{ \frac{\td(Y^g)}{ \ch( \Tr_g( \sum_i (-1)^i \Lambda^i N^{\ast}_{Y^g/Y}) )} }^{-1} 
\pr_{2 \ast}( \tau_{g}(\CE) \pr_1^{\ast}( \ch^G(A)) ) \\
& = \Phi_{v(\CE)}( v(A) ).
\end{align*}
\end{proof}

\subsection{Permutation action}
Let $X$ be a smooth projective variety, and consider the action of $G=S_n$ on $Y:=X^n$ by permutation of factors.
Given a sheaf $F$ on $X$ the product
\[ F^{\boxtimes n} = \otimes_i \pr_i^{\ast}(F) \]
has a canonical $S_n$-linearization, giving rise to $(F^{\boxtimes n},1) \in D_{G}(Y)$, see \eqref{linearization}.

The Mukai vector $v(F^{\boxtimes n})$ 
can be described as follows.
First one has the following 'localization' formula that describes the component of maximal cycle type.

\begin{prop} \label{prop:g comp} Let $g \in G$ have cycle type $(n)$.
\begin{gather*}
\ch_g(F^{\boxtimes n},1) = \psi^n( \ch(F)) \\
\tau_{g}(F^{\boxtimes n},1) 
= \frac{1}{n^{\dim X}} \psi^n( \tau(F) ) \\ 
v_g( F^{\boxtimes n},1)
= \frac{1}{n^{\dim X/2}} \psi^n( v(F)).
\end{gather*} 
\end{prop}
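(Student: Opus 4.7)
The plan is to compute each of the three quantities directly from the definition by exploiting two facts: for $g \in S_n$ an $n$-cycle acting on $Y = X^n$ by permutation, the fixed locus is the small diagonal $\Delta \cong X$; and the trace $\Tr_g$ of the cyclic permutation action is controlled by the Adams operation.

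First I would set up the geometry. The restriction $(F^{\boxtimes n},1)|_{\Delta}$ is $F^{\otimes n}$ with $g$ acting by cyclic permutation of the tensor factors. The tangent bundle $T_{X^n}|_{\Delta} \cong T_X^{\oplus n}$ decomposes under $\langle g \rangle \cong \BZ/n$ into the trivial isotype $T_\Delta$ and eigenspaces $T_X^{(k)}$ for $k = 1,\dots,n-1$ on which $g$ acts by the scalar $\zeta^k$, with $\zeta = e^{2\pi i/n}$. This gives $N := N_{\Delta/X^n} \cong \bigoplus_{k=1}^{n-1} T_X^{(k)}$.

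The key identity is $\Tr_g\bigl((F^{\boxtimes n}, 1)|_{\Delta}\bigr) = \psi^n(F)$ in $K(\Coh(X))$, where $\psi^n$ is the $n$-th Adams operation. By the splitting principle it suffices to take $F = L_1 \oplus \cdots \oplus L_r$ a sum of line bundles: the cyclic action on $F^{\otimes n}$ permutes the summands $L_{i_1} \otimes \cdots \otimes L_{i_n}$, and only the constant multi-indices are $g$-fixed (with trivial action on the one-dimensional fibre), while non-fixed orbits contribute trace zero. This yields $\Tr_g(F^{\otimes n}) = \bigoplus_i L_i^{\otimes n} = \psi^n(F)$. Applying $\ch$ and using that $\psi^n$ acts on cohomology by multiplying a class of complex degree $k$ by $n^k$ gives $\ch_g(F^{\boxtimes n}, 1) = \psi^n(\ch F)$.

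For the Todd correction, the decomposition of $N^*$ gives
\[ \ch\bigl(\Tr_g \textstyle\sum_i (-1)^i \Lambda^i N^*\bigr) = \prod_{k=1}^{n-1} \prod_i (1 - \zeta^{-k} e^{-x_i}), \]
where $x_i$ are the Chern roots of $T_X$. The cyclotomic identity $\prod_{k=1}^{n-1}(1 - \zeta^{-k} z) = (1-z^n)/(1-z)$ collapses this to $\prod_i (1 - e^{-n x_i})/(1 - e^{-x_i})$. Dividing $\td(X) = \prod_i x_i/(1 - e^{-x_i})$ by this yields $\prod_i x_i/(1 - e^{-n x_i}) = n^{-\dim X} \psi^n(\td X)$, from which the formulas for $\tau_g$ and $v_g$ follow immediately, using that $\psi^n$ is a ring homomorphism on cohomology and therefore commutes with square roots of classes with degree-zero component $1$. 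The main obstacle I expect is carefully tracking the graded-tensor sign convention from the Remark after \eqref{linearization} in the splitting-principle computation; for vector bundles $F$ concentrated in degree zero all signs are trivial, and the proof then reduces to the symmetric-function manipulations above.
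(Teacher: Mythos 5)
Your proposal is correct and follows essentially the same route as the paper: establish $\Tr_g$ on the diagonal restriction via line bundles (the paper invokes that line bundle classes generate topological $K$-theory, you use the splitting principle plus additivity of $\Tr_g((-)^{\otimes n}|_\Delta)$, which is the same point), then handle the normal-bundle correction and deduce the $\tau_g$ and $v_g$ formulas from the fact that $\psi^n$ is a ring homomorphism commuting with square roots. The only difference is that where the paper cites Nori's identities $\ch\Tr_g(\sum_i(-1)^i\Lambda^iN_{\Delta/X^n})=\theta^n(X)$ and $\psi^n(\td X)=n^{\dim X}\td(X)/\theta^n(X)$ for Bott's cannibalistic classes, you prove them directly via the eigenspace decomposition of $N_{\Delta/X^n}$ and the cyclotomic identity $\prod_{k=1}^{n-1}(1-\zeta^{-k}z)=(1-z^n)/(1-z)$, which makes the argument self-contained.
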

Here $\psi^n$ is the Adams operation which acts by multiplication by $n^i$ on $H^{2i}$.

\begin{proof}
The first goes back to a PhD thesis of Moonen,
see \cite[Lemma 5.11]{MS} and the references therein.
However, it can also be derived directly quite easily,
see the discussion in \cite{Nori}.
To sketch the details,
for a line bundle $\CL$ 
the $g$-action on the restriction $\CL^{\boxtimes n}|_{\Delta_{1 ... n}}$ is trivial,
so $\ch_g(\CL^{\boxtimes n},1) = \ch( \CL^{\otimes n}) = \psi^n( \ch(\CL))$.
The general case follows since classes of (topological) line bundles generate the topological $K$-theory.

For the second claim one has that (e.g. \cite[Lemma 3.3]{Nori})
\[ \ch \Tr_g( \sum_i (-1)^i \Lambda^i N_{\Delta/X^n} ) = \theta^n(X), \]
where $\theta^n(X)$ are Bott's cannibalistic classes, and by
\cite[Lemma 1.4]{Nori} these satisfy
\[ \psi^n( \td(X)) = n^{\dim X} \frac{ \td(X)}{\theta^n(X)}. \]
This concludes the claim since $\psi^n$ is a ring homomorphism.

For the last part, observe that
\[
v_g( F^{\boxtimes n})=
\ch_g( F^{\boxtimes n}) \sqrt{ \frac{1}{n^{\dim X}} \psi^n( \td(X)) }
\]
and that $\sqrt{ - }$ and $\psi^n$ commute.
\end{proof}

In the general case we have the following 'multiplicativity' \cite[Example 3.1]{MS} .
Let $g \in S_n$ be of cycle type $(k_1, \ldots, k_{\ell})$.
Then under the isomorphism $Y^g \cong X^{\ell}$ 
we have
\begin{equation} \ch_g(F^{\boxtimes n},1) = \psi^{k_1}( \ch(F)) \boxtimes \cdots \boxtimes \psi^{k_{\ell}}( \ch(F)). \label{multiplicativity} \end{equation}

\subsection{Symmetric actions on $X^n$} \label{subsec:symmetric action on Sn}
Let $\CE \in D^b( X \times X)$ be the kernel of a Fourier-Mukai transform $\FM_{\CE} : D^b(X) \to D^b(X)$,
and as in \eqref{induced box transform} consider the induced $G:=S_n$-equivariant functor on $Y=X^g$:
\[ \FM_{\CE}^{\boxtimes n} : D_{G}(Y) \to D_G(Y), \quad A \mapsto \pr_{2 \ast}( \pr_1^{\ast}(A) \otimes (\CE^{\boxtimes n},1) ). \]

Let $\phi = \FM_{v(\CE)} : H^{\ast}(X) \to H^{\ast}(X)$ the action on cohomology,
and consider the induced action on orbifold cohomology
\[ \phi^{\boxtimes n} := \FM_{v(\CE^{\boxtimes n},1)} : H_G^{\ast}(Y) \to H_G^{\ast}(Y). \]

We have the following description. Define
\[ \psi^n(\phi) = n^{\deg_{\BR}/2} \circ \phi \circ n^{-\deg_{\BR}/2} \]
where $n^{\deg_{\BR}}$ acts on $H^i(X)$ by multiplication by $n^i$.

\begin{lemma} We have
\begin{equation} \phi^{\boxtimes n}( \alpha_g )_{g \in G} = ( \widetilde{\phi}(\alpha_g) )_{g \in G} \label{induced action} \end{equation} 
where if $g$ is of cycle type $(k_1, \ldots, k_{\ell})$,
then $\widetilde{\phi} = \psi^{k_1}(\phi) \boxtimes \ldots \boxtimes \psi^{k_{\ell}}(\phi)$
under the isomorphism $Y^g \cong X^{\ell}$.
\end{lemma}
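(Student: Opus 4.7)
The plan is to prove the formula $g$-component by $g$-component in the decomposition $H_G^{\ast}(Y) = \bigl(\bigoplus_{g \in G} H^{\ast}(Y^g)\bigr)^G$. Since $\phi^{\boxtimes n} = \FM_{v(\CE^{\boxtimes n},1)}$ acts on the $g$-component via the correspondence $v_g(\CE^{\boxtimes n},1) \in H^{\ast}((Y \times Y)^g)$, my strategy is first to factor this $g$-component as a box product along the cycles of $g$, then to observe that the induced Fourier--Mukai transform factors accordingly, and finally to verify that each factor equals $\psi^{k_i}(\phi)$.

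Fix $g$ of cycle type $(k_1,\ldots,k_\ell)$. Under the identifications $Y^g \cong X^\ell$ and $(Y \times Y)^g \cong (X \times X)^\ell$, followed by the shuffle $(X \times X)^\ell \cong X^\ell \times X^\ell = Y^g \times Y^g$, the first task is to extend the multiplicativity formula~\eqref{multiplicativity} from $\ch_g$ to $v_g$. Since the normal bundle $N_{Y^g/Y}$ splits as a direct sum over the cycles of $g$, and both $\td$ and Bott's cannibalistic class $\ch \Tr_g(\sum_i (-1)^i \Lambda^i N^{\ast})$ are multiplicative under direct sums, the square-root correction in the definition of $v_g$ factorises cycle by cycle. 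Combining this with Proposition~\ref{prop:g comp}, I expect to obtain the box product
\[ v_g(\CE^{\boxtimes n},1) \;=\; \frac{1}{k_1^{\dim X}}\psi^{k_1}(v(\CE)) \boxtimes \cdots \boxtimes \frac{1}{k_\ell^{\dim X}}\psi^{k_\ell}(v(\CE)) \;\in\; H^{\ast}((X\times X)^\ell). \]

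The second step is purely formal: a Fourier--Mukai transform whose kernel is a box product on a box-product ambient space is the box product of the individual Fourier--Mukai transforms. Hence for a decomposable class $\alpha_g = \eta_1 \otimes \cdots \otimes \eta_\ell \in H^{\ast}(X^\ell)$,
\[ \phi^{\boxtimes n}(\alpha_g) \;=\; \bigotimes_{i=1}^\ell k_i^{-\dim X}\, \FM_{\psi^{k_i}(v(\CE))}(\eta_i), \]
so it remains to check the single-variable operator identity $k^{-\dim X}\,\FM_{\psi^{k}(v(\CE))} = \psi^k(\phi)$ on $H^{\ast}(X)$. This is a short degree count: writing $v(\CE) = \sum_a \beta_a \otimes \gamma_a$ with $\beta_a \in H^{2b_a}$ and $\gamma_a \in H^{2c_a}$, for $\eta \in H^{2j}$ the twisted kernel contributes a factor $k^{b_a+c_a}$, the integration $\int_X \eta \cdot \beta_a$ forces $b_a+j=\dim X$, and the prefactor $k^{-\dim X}$ collapses the overall constant to $k^{c_a-j}$, which is precisely the value produced by the defining formula $\psi^k(\phi) = k^{\deg_\BR/2} \circ \phi \circ k^{-\deg_\BR/2}$ on $\eta$.

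The main obstacle I anticipate is the first step, namely extending the multiplicativity from $\ch_g$ to the Mukai vector $v_g$, since Proposition~\ref{prop:g comp} as stated covers only the cycle type $(n)$. Once the factorisation of the square-root normal-bundle correction across cycles is in hand, the remaining steps are routine Fourier--Mukai bookkeeping.
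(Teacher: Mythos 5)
Your proposal is correct and follows essentially the same route as the paper: factor the correspondence kernel $v_g(\CE^{\boxtimes n},1)$ cycle by cycle, reduce to the maximal cycle type via Proposition~\ref{prop:g comp}, and verify by a degree count that $k^{-\dim X}\FM_{\psi^k(v(\CE))}=\psi^k(\phi)$. The two points you flag as needing care — extending multiplicativity from $\ch_g$ to $v_g$ via the splitting of $N_{Y^g/Y}$ over the cycles, and the final direct check — are exactly the steps the paper leaves implicit, and your treatment of both is sound.
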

\begin{proof}
Since pullback and pushforward acts factorwise, also $\FM_{v(\CE^{\boxtimes n},1)}$ acts factorwise.
By \eqref{multiplicativity} it suffices moreover to  consider $g$ of maximal cycle type $(n)$
for which we have by Proposition~\ref{prop:g comp} that
\[ v_g(\CE^{\boxtimes n}) = \frac{1}{n^{\dim(X)}} \psi^n( v(\CE)). \]
By a direct check this acts on $H^{\ast}(X)$ as a correspondence by $n^{\deg_{\BR}/2} \circ \phi \circ n^{-\deg_{\BR}/2}$.
\end{proof}

\begin{rmk}
If $\phi \in \End H^{\ast}(X)$ we call $\phi^{\boxtimes n} \in \End H_G(X^n)$ defined by \eqref{induced action}
the induced action on $H_G(X^n)$.
\end{rmk}

%
%We discuss two examples:
%\begin{example}
%For the spherical twist $\ST_{\CO_S}$ we have in $K$-theory $\CE=\Delta_S- \CO_S \otimes \CO_S$.
%Hence we find
%\[ v_g( \CE^{\boxtimes n}) = \Delta_{H^2(S)} - \frac{1}{n^2} 1 \otimes 1 - n^2 \pt \otimes \pt. \]
%\end{example}
%
%\begin{example}
%For tensoring with a line bundle $L \in \Pic(S)$ we have kernel $\CE=\Delta_{\ast}(L)$.
%This has Mukai vector:
%\[ v(\Delta_{\ast} L) = \Delta_{\ast}(\ch(L)) \]
%We find that
%\begin{align*}
% v_g( \CE^{\boxtimes n}) 
% &  = \frac{1}{n^2} \psi^n( \Delta_{\ast}(\ch L))  \\
% & = \Delta_X + n \Delta_{\ast}(c_1(L)) + n^2 \Delta_{\ast}\left( \frac{c_1(L)^2}{2} \right)
% \end{align*} 
%\end{example}

\subsection{Taking invariants}
For the trivial $G$-action on a space $X$ we can consider the functor that takes invariants:
\[ (-)^G : \Coh_G(X) \to \Coh(X), \quad W \mapsto W^G. \]

\begin{lemma} For any $W$-equivariant sheaf,
	\begin{gather*}
		\ch( W^G ) = \frac{1}{|G|} \sum_{g \in G} \ch_g(W), \quad \quad
		\tau( W^G ) = \frac{1}{|G|} \sum_{g \in G} \tau_g(W) \\
		v( W^G ) = \frac{1}{|G|} \sum_{g \in G} v_g(W).
	\end{gather*}
\end{lemma}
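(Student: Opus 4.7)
The plan is to derive all three identities from a single $K$-theoretic identity
\[ [W^G] = \frac{1}{|G|} \sum_{g \in G} \Tr_g(W) \quad \text{in } K(\Coh(X)) \otimes \BC, \]
and then apply the three ring homomorphisms $\ch$, $\tau$, $v$ factorwise. Note that since the $G$-action on $X$ is trivial, we have $X^g = X$ and $N_{X^g/X} = 0$ for every $g$, so the expressions simplify to $\ch_g(W) = \ch(\Tr_g(W))$, $\tau_g(W) = \ch(\Tr_g(W)) \td(X)$, and $v_g(W) = \ch(\Tr_g(W)) \sqrt{\td(X)}$. Hence the three displayed formulas are immediate corollaries of the $K$-theoretic identity.

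To prove the $K$-theoretic identity, I would use the decomposition from the subsection on trivial actions:
\[ K(\Coh_G(X)) \otimes \BC \cong K(\Coh(X)) \otimes R[G], \qquad [W] = \sum_i [W_i] \otimes [V_i], \]
where the $V_i$ run over the irreducible complex $G$-representations. Under this decomposition the functor of $G$-invariants picks out the isotypic component of the trivial representation, so $[W^G] = [W_{\text{triv}}]$. On the other hand, by the very definition of $\Tr_g$,
\[ \frac{1}{|G|} \sum_{g \in G} \Tr_g(W) = \sum_i [W_i] \cdot \frac{1}{|G|} \sum_{g \in G} \chi_{V_i}(g). \]
The standard character orthogonality relation $\tfrac{1}{|G|} \sum_{g \in G} \chi_{V_i}(g) = \langle \chi_{V_i}, \chi_{\text{triv}} \rangle = \dim V_i^G$ then yields $1$ for $V_i$ trivial and $0$ otherwise, so the right-hand side equals $[W_{\text{triv}}] = [W^G]$.

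Applying $\ch(-)$ to both sides gives the first formula. Multiplying both sides further by $\td(X)$ (respectively $\sqrt{\td(X)}$) and using that these factors are $G$-independent gives the formulas for $\tau$ and $v$. Since no genuine analytic or geometric step is involved beyond the standard character theory of finite groups, I do not expect a real obstacle; the only thing to be careful about is the reduction to complex coefficients, but this causes no loss of information for the statement since $\ch$, $\tau$, $v$ already take values in cohomology with $\BQ$-coefficients.
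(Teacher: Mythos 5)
Your argument is correct and is essentially the paper's own proof: both reduce to the isotypic decomposition $W=\bigoplus_\chi W_\chi\otimes V_\chi$, apply the averaging formula $\dim(V^G)=\tfrac{1}{|G|}\sum_g\tr(g|V)$ (your character-orthogonality step is the same fact), and then observe that for the trivial action $X^g=X$ with vanishing normal bundle, so $\tau_g$ and $v_g$ differ from $\ch_g$ only by the fixed factors $\td(X)$ and $\sqrt{\td(X)}$. Packaging this as a single $K$-theoretic identity before applying $\ch$, $\tau$, $v$ is a mild reorganization, not a different route.
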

\begin{proof}
	We use that for any $G$-representation we have
	\[ \dim(V^G) = \frac{1}{|G|} \sum_{g \in G} \tr(g| V). \]
	Write $W = \bigoplus W_{\chi} \otimes V_\chi$ where $V_{\chi}$ are the irreducible of $G$.
	We get
	\begin{align*}
		\ch( W^G ) = \ch( W_1 ) & = \frac{1}{|G|} \sum_{g \in G} \ch(W_{\chi}) \tr(g| V_{\chi}) \\
		& = \frac{1}{|G|} \sum_{g \in G} \ch_g(W).
	\end{align*}
	Similarly, note that for the trivial $G$-action we have $X^g=X$,
	so $\tau_g(W) = \ch_g(W) \cdot \td(T_X)$, hence the second claim follows by multiplication by $\td(T_X)$.
\end{proof}

Define the sum map
\[ \sigma : H_G^{\ast}(X) \to H^{\ast}(X), (\alpha_g) \mapsto \frac{1}{|G|} \sum_{g \in G} \alpha_g. \]
Then the above says that we have a commutative diagram
\[
\begin{tikzcd}
K(\Coh_G(X)) \ar{r}{( - )^G} \ar{d}{v( - )} & K(\Coh(X)) \ar{d}{v} \\
H_G^{\ast}(X) \ar{r}{\sigma} & H^{\ast}(X).
\end{tikzcd}
\]

\subsection{The Bridgeland-King-Reid isomorphism} \label{sec:appendix BKR}
For a smooth projective surface $S$ recall from \eqref{section:BKR} the Bridgeland-King-Reid equivalence
\[ \Psi : D_G(S^n) \to D^b(S^{[n]}), \quad \CF \mapsto \pr_{2\ast}( \pr_1^{\ast}(\CF) \otimes \CO_{Z_n} )^G. \]
By the discussion above we obtain the induced cohomological transform:
\[ \Psi^H = \sigma \circ \FM_{v(\CO_Z)} : H_G^{\ast}(S^n) \to H_G^{\ast}(S^{[n]}) \to H^{\ast}(S^{[n]}), \]
such that the following diagram commutes:
\[
\begin{tikzcd}
K(\Coh_G(S^n)) \ar{d}{v} \ar{r}{ \Psi } & K( \Coh(S^{[n]})) \ar{d}{v} \\
H_G^{\ast}(S^n) \ar{r}{\Psi^H} & H^{\ast}(S^{[n]}).
\end{tikzcd}
\]

The last thing we need for Section~\ref{subsec:BKR in cohomology} is the following:
\begin{lemma}
$\Psi^H$ is an isomorphism
\end{lemma}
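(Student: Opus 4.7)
The plan is to produce an explicit two-sided inverse of $\Psi^H$ from the quasi-inverse of $\Psi$. Since $\Psi : D_G(S^n) \to D^b(S^{[n]})$ is an equivalence of triangulated categories, it admits a quasi-inverse $\Psi^{-1}$, which is again a Fourier-Mukai-type transform with an equivariant kernel $\CE$. This can be seen either by Orlov's representability theorem in the equivariant setting, or by writing the inverse kernel explicitly as the equivariant derived dual of $\CO_{Z_n}$ with a shift (the triviality of the canonical bundles involved making this particularly clean). I would then apply the same recipe used to build $\Psi^H$ to the kernel $\CE$, obtaining a cohomological companion $(\Psi^{-1})^H : H^{\ast}(S^{[n]}) \to H_G^{\ast}(S^n)$, and show that it is a two-sided inverse of $\Psi^H$.

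The argument rests on two formal properties of the cohomological construction $F \mapsto F^H$: (i) functoriality under composition, $(F_1 \circ F_2)^H = F_1^H \circ F_2^H$, which at the level of kernels amounts to the identity $v(\CE_1 \star \CE_2) = v(\CE_1) \star v(\CE_2)$ where $\star$ denotes convolution; and (ii) $\id^H = \id$, i.e., the cohomological Fourier-Mukai transform with a diagonal kernel is the identity. Property (i) follows from the compatibility of the orbifold Mukai vector with pullback, tensor product, and pushforward developed earlier in the appendix, combined with the equivariant Grothendieck-Riemann-Roch theorem of Baum-Fulton-Quartz. For property (ii), GRR applied to $\Delta : X \to X \times X$ gives $v(\CO_\Delta) = \Delta_\ast(1) = [\Delta]$, after which $\FM_{[\Delta]}(\alpha) = \pr_{2\ast}(\pr_1^{\ast}(\alpha) \cdot \Delta_\ast(1)) = \pr_{2\ast} \Delta_\ast \Delta^{\ast} \pr_1^{\ast}(\alpha) = \alpha$ by the projection formula; the equivariant case reduces to the same check componentwise on the decomposition $H_G^{\ast}(X) = (\oplus_g H^{\ast}(X^g))^G$ using the factorwise definitions from the appendix.

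Granted these two properties, the compositions $\Psi^{-1} \circ \Psi = \id_{D_G(S^n)}$ and $\Psi \circ \Psi^{-1} = \id_{D^b(S^{[n]})}$ descend via $(\cdot)^H$ to $\id_{H_G^{\ast}(S^n)}$ and $\id_{H^{\ast}(S^{[n]})}$ respectively, which exhibits $(\Psi^{-1})^H$ as a two-sided inverse to $\Psi^H$. The main technical point to be careful about is the equivariant bookkeeping --- tracking the correct linearization of the diagonal under the diagonal $S_n$-action, the role of the invariants functor $\sigma$ in the definition of $\Psi^H$, and the signs from the $\chi$-twist in the orbifold framework --- but none of this requires new input beyond the machinery set up in this appendix.
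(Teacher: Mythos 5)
Your strategy is viable and genuinely different from the paper's. The paper gives two arguments: the first invokes Atiyah--Segal (the equivariant Chern character identifies $G$-equivariant topological $K$-theory with orbifold cohomology over $\BC$) together with the fact, proved in \cite[Sec.10]{BKR}, that $\Psi$ induces an isomorphism on topological $K$-theory; the second is a direct computation with $\tau_g(\CO_{Z_n^g}) = [Z_n^g] + \ldots$ showing that $\Psi^H$ is block upper-triangular with the classical isomorphisms $H^{\ast}((S^n)^g)^{C(g)} \cong H^{\ast}(S^{[n]})$ on the diagonal. You instead run the standard ``cohomological FM transforms of equivalences are invertible'' argument from the non-equivariant theory. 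What your route buys is formality: it needs no knowledge of the geometry of $Z_n$ and would apply to any equivalence given by an equivariant kernel. What it costs is that the two formal properties you rely on are not actually established in the appendix and carry real content, so you should be explicit that they require proof.

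Concretely: (i) the appendix proves only the object-level compatibility $v(\FM_{\CE}(A)) = \FM_{v(\CE)}(v(A))$, not the kernel-level convolution identity $v(\CE_1 \star \CE_2) = v(\CE_1) \star v(\CE_2)$; the latter is a separate (routine but necessary) application of Baum--Fulton--Quart Riemann--Roch on triple products. (ii) Your computation of $v(\CO_\Delta)$ is too quick in the equivariant setting: for $g \neq e$ the $g$-component involves $\Delta^g \subset (S^n)^g \times (S^n)^g$ and the factor $\ch(\Tr_g(\sum_i (-1)^i \Lambda^i N^{\ast}))$ in the denominator of the orbifold Mukai vector, so showing that each component equals $[\Delta_{(S^n)^g}]$ is genuinely a Lefschetz--Riemann--Roch computation, not just the projection formula. (iii) Most delicately, $\Psi$ is not a kernel transform between two $G$-equivariant categories: it passes through the invariants functor, and on the target the orbifold cohomology for the trivial action is $\bigoplus_{[g]} H^{\ast}(S^{[n]})$ collapsed by $\sigma$, with the inflation going the other way as the diagonal embedding. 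The identities $\Psi^{-1}\circ\Psi = \id$ and $\Psi\circ\Psi^{-1}=\id$ therefore live in two different kernel calculi (one on $S^n \times S^n$ with diagonal $G$-action, one on $S^{[n]} \times S^{[n]}$ after invariants), and you must check that $\sigma$ and the inflation intertwine the convolutions correctly. You flag this as ``bookkeeping,'' but it is exactly the point where the argument could silently fail, and it deserves a lemma of its own. None of these issues is fatal --- the machinery of Ploog and Popa covers them --- but as written the proposal asserts rather than proves the steps that distinguish the orbifold case from the classical one.
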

\begin{proof}
The most direct way to see this is by observing that
the equivariant Chern character \eqref{equivariant Chern character}
defines an isomorphism between the $G$-equivariant topological $K$-theory and the orbifold cohomology \cite{AS}.
Hence the claim follows from the fact that the Bridgeland-King-Reid isomorphism
induces an isomorphism in topological $K$-theory, see \cite[Sec.10]{BKR}.

Alternatively, recall the universal family $Z_n \subset S^{[n]} \times S^n$ 
and let $p,q$ be the projection of $S^{[n]} \times S^n$ to the factors.
We then compute directly:
\begin{align*}
\tau( R p_{\ast}( q^{\ast}(\CF))^G 
& = \frac{1}{|G|} \sum_g \tau_g( R p_{\ast}( \CO_{Z_n} \otimes q^{\ast}(\CF)) ) \\
& =
\frac{1}{|G|} \sum_g p_{\ast}( \tau_g(\CO_{Z_n^g}) q^{\ast}( \ch_g(\CF))  \\
& =
\frac{1}{|G|} \sum_g p_{\ast}( \tau(\CO_{Z_n^g}) q^{\ast}\left( 
\frac{\ch_g(\CF)}{ \ch( \Tr_g( \sum_i (-1)^i N_{(S^n)^g/S^n}))}
\right)
\end{align*}
where we used the construction of $\tau$ as in \cite{BFQ} in the last step.
We have
\[ \tau(\CO_{Z_n^g}) = [Z_n^g] + ... \]
where $...$ stands for classes of higher codimension.
Hence $\Psi^H$ is upper triangular, where on the diagonal we have the usual cohomological isomorphism between $H^g(S^n)$ and $H^{\ast}(S^{[n]})$,
as discussed for example in \cite{FG} or \cite{dCM}.
\end{proof}

Mathemathisches Institut, Universit\"at Bonn

georgo@math.uni-bonn.de \\

\end{document}